\documentclass{amsart}

\usepackage[theoremfont]{newtxtext}
\usepackage[varvw,upint,smallerops]{newtxmath}

\usepackage[papersize={17cm,24cm},text={12.5cm,19.5cm},centering]{geometry}

\usepackage[initials,lite]{amsrefs}

\usepackage{tikz-cd}
\tikzcdset{row sep/normal=3.5em}

\usepackage{verbatim}

\usepackage{amsfonts}
\usepackage[shortlabels]{enumitem}
\setlist{noitemsep}

\numberwithin{equation}{section}

\theoremstyle{definition}

\newtheorem{theorem}{Theorem}[section]
\newtheorem{proposition}{Proposition}[section]
\newtheorem{corollary}{Corollary}[section]
\newtheorem{lemma}{Lemma}[section]
\newtheorem{definition}{Definition}[section]

\newcommand{\CA}{\mathcal{A}}
\newcommand{\op}{\textup{op}}
\DeclareMathOperator{\im}{im}
\newcommand{\Z}{\mathbb{Z}}

\newcommand{\T}{\mathsf{T}}

\renewcommand{\nplus}{[n{+}1]}
\newcommand{\nminus}{[n{-}1]}

\renewcommand{\]}{]\!]}

\title{The Dold--Kan theorem for paracyclic modules}

\author{Ezra Getzler}

\address{Department of Mathematics, Northwestern University}

 \email{getzler@northwestern.edu}

\begin{document}

\begin{abstract}
  We study the Karoubi operator on the unnormalized chain complex of a
  paracyclic module; its restriction to the normalized chain complex
  has previously been considered by Dwyer and Kan, and in the cyclic
  case by Cuntz and Quillen. We obtain a direct proof of the Dold-Kan
  theorem for paracyclic modules of Dwyer and Kan, by directly
  relating the Karoubi operator to projection to the normalized
  subcomplex.
\end{abstract}

%\begin{keyword}
%  paracyclic modules \sep cyclic modules \sep Dold--Kan theorem; \MSC
%  18G31 \sep \MSC 16E40
%\end{keyword}

\maketitle

\section{Introduction}

Let $\Lambda_\infty$ be the category whose objects are the natural numbers
$[n]$, $n\ge0$, and whose morphisms are weakly monotone periodic
functions:
\begin{multline*}
  \Lambda_\infty([m],[n]) \\
  = \{ f:\Z\to\Z \mid \text{$f(j)\le f(k)$ if $j\le k$, and
    $f(j+m+1)=f(j)+n+1$} \} .
\end{multline*}
This category was denoted $L$ by Dwyer and Kan \cite{DK}; the notation
$\Lambda_\infty$ is due to Feigin and Tsygan \cite{FT}.

The category $\Lambda_\infty$ has subcategories
\begin{equation*}
  \Lambda_+([m],[n]) = \{ f\in\Lambda_\infty([m],[n]) \mid f(0)\ge0 \} ,
\end{equation*}
denoted $K$ by Dwyer and Kan \cite{DK}, and
\begin{equation*}
  \Delta([m],[n]) = \{ f\in\Lambda_+([m],[n]) \mid f(m)\le n \} .
\end{equation*}

Define the morphisms $\varepsilon^n_i\in\Delta(\nminus,[n])$,
$0\le i\le n$, $\eta^n_i\in\Delta(\nplus,[n])$, $0\le i\le n$, and
$\eta^n_{n+1}\in\Lambda_+(\nplus,[n])$, by
\begin{align*}
  \varepsilon_i^n(j) &=
  \begin{cases}
    j , & 0\le j<i , \\
    j+1 , & i\le j\le n-1 ,
  \end{cases} \intertext{and}
  \eta_i^n(j) &=
  \begin{cases}
    j , & 0\le j\le i , \\
    j-1 , & i<j\le n+1 .
  \end{cases}
\end{align*}
These morphisms satisfy the relations
\begin{align*}
  \varepsilon_i^{n+1} \varepsilon_j^n &= \varepsilon_{j+1}^{n+1} \varepsilon_i^n , \qquad \, \, 0\le i\le j\le n , \\
  \eta_j^n \varepsilon_i^{n+1} &=
                    \begin{cases}
                      \tau_n , & j-i=n+1 , \\
                      \varepsilon_i^n \eta_{j-1}^{n-1} , & 1\le j-i\le n , \\
                      1 , & j-i=0,-1, \\
                      \varepsilon_{i-1}^n \eta_j^{n-1} , & -n-1\le j-i<-1 ,
                    \end{cases} \\
  \eta_j^{n-1} \eta_i^n &= \eta_i^{n-1} \eta_{j+1}^n , \qquad \!\! 0\le i\le j\le n .
\end{align*}
The morphism $\tau_n=\eta^n_{n+1}\varepsilon^{n+1}_0\in\Lambda_\infty([n],[n])$ is the function
\begin{equation*}
  \tau_n(i) = i+1 .
\end{equation*}
We have $\tau_n^{n+1}\varepsilon^n_i=\varepsilon^n_i\tau_{n-1}^n$ and
$\tau_n^{n+1}\eta^n_i=\eta^n_i\tau_{n+1}^{n+2}$. It follows that
$\Lambda_+$ is generated by the morphisms
\begin{equation*}
  \{ \varepsilon_0^n : \nminus \to [n] \}_{n>0} \cup \{ \eta_{n+1}^n : \nplus \to
  [n]\}_{n\ge0} ,
\end{equation*}
and $\Lambda_\infty$ is obtained from $\Lambda_+$ by inverting
$\{ \tau_n^{-1} : [n] \to [n] \}_{n\ge0}$.

There is a contravariant isomorphism of categories $f\mapsto f^\circ$ from
$\Lambda_\infty$ to its opposite $\Lambda^\op_\infty$, which acts on generators by
interchanging $\varepsilon^n_i$ and $\eta^{n-1}_{n-i}$, and fixing
$\tau_n$. In particular, it exchanges the generators $\varepsilon^n_0$ and
$\eta^{n-1}_n$ of $\Lambda_\infty$.

\begin{definition}
  Let $\CA$ be a pre-additive category (a category enriched in abelian
  groups).  A \textbf{paracyclic} (respectively \textbf{duplicial} or
  \textbf{simplicial}) module $M_\bullet$ is a presheaf on
  $\Lambda_\infty$ (respectively $\Lambda_+$ or $\Delta$) taking values in $\CA$.
\end{definition}

Denote the (co)action of the morphisms $\varepsilon_i^n$ and
$\eta_i^n$ on a duplicial module $M_\bullet$ by
$\partial_{n,i}:M_n\to M_{n-1}$ and $s_{n,i}:M_n\to M_n$, and the (co)action of
the morphisms $\tau_n$ by $t_n:M_n\to M_n$. The relations among the
generators of $\Lambda_+$ become the following relations on duplicial
modules $M_\bullet$:
\begin{align*}
  \partial_{n,k} \partial_{n+1,j} &= \partial_{n,j} \partial_{n+1,k+1} , \qquad \,\, 0\le j\le k\le n , \\
  \partial_{n+1,j} s_{n,k} &=
                    \begin{cases}
                      t_n , & k-j=n+1 , \\
                      s_{n-1,k-1} \partial_{n,j} , & 1\le k-j\le n , \\
                      1 , & k-j=0,-1, \\
                      s_{n-1,k} \partial_{n,j-1} , & -n-1\le k-j<-1 ,
                    \end{cases} \\
  s_{n,j} s_{n-1,k} &= s_{n,k+1} s_{n-1,j} , \qquad \!\! 0\le j\le k\le n .
\end{align*}
In particular,
\begin{align*}
  \partial_{n,i} t_n &=
                \begin{cases}
                  t_{n-1} \partial_{n,i+1} , & 0\le i< n , \\
                  \partial_{n,0} , & i=n ,
                \end{cases} \intertext{and}
  t_{n+1} s_{n,i} &=
                \begin{cases}
                  s_{n,n+1} , & i=0 , \\
                  s_{n,i-1} t_n , & 0< i\le n+1 .
                \end{cases}
\end{align*}
Denote the action of $t_n^{n+1}$ on $M_n$ by $\T$. We see that
$\T_{n-1}\partial_{n,i}=\partial_{n,i}\T_n$ and $\T_{n+1}s_{n,i}=s_{n,i}\T_n$.

An \textbf{idempotent} $(A,p)$ in a pre-additive category $\CA$ is a
pair consisting of an object $A$ of $\CA$ and $p:A\to A$ an idempotent
endomorphism of $A$, that is, $p^2=p$. If $p:A\to A$ is an idempotent,
then so is $p^\perp=1-p$, and the image of the idempotent $p$ is the
kernel of $p^\perp$ (and vice versa).

Let $M_\bullet$ be a simplicial module. Dold and Puppe \cite{DP}*{Section 3}
define idempotents
\begin{equation*}
  p_n = (1-s_{n-1,0}\partial_{n,1})\ldots(1-s_{n-1,n-1}\partial_{n,n}) : M_n \to M_n .
\end{equation*}
The kernel of the idempotent $p_n:M_n\to M_n$, if it exists, is the
module of degenerate chains $D_n(M)$. The image of $p_n$ is the module
$N_n(M)$ of normalized chains. Provided these modules exist, we have
the direct sum decomposition
\begin{equation*}
  M_n = N_n(M) \oplus D_n(M) .
\end{equation*}

The module $N_n(M)$ is the submodule on which the face maps
$\partial_{n,i}$, $1\le i\le n$, vanish:
\begin{equation*}
  N_n(M) = \bigcap_{i=1}^n \ker\bigl( \partial_{n,i}:M_n\to M_{n-1} \bigr) \subset M_n .
\end{equation*}
Similarly, the module $D_n(M)$ is the sum of the images of the
degeneracy maps $s_{n-1,i}$, $0\le i<n$:
\begin{equation*}
  D_n(M) = \sum_{i=0}^{n-1} \im\bigl( s_{n-1,i}:M_{n-1}\to M_n \bigr) \subset M_n .
\end{equation*}

The following property of a pre-additive category $\CA$ was introduced
by Freyd \cite{Freyd} (cf.\ B\"uhler \cite{Buhler}*{Section 7}).
\begin{definition}
  A \textbf{section} of a morphism $f:M\to N$ is a morphism $g:N\to M$
  with
  \begin{equation*}
    fg=1_N .
  \end{equation*}
  A morphism with a section is called a \textbf{retraction}. A
  pre-additive category $\CA$ is \textbf{weakly idempotent complete}
  if every retraction has a kernel.
\end{definition}

\begin{lemma}
  If $\CA$ is weakly idempotent complete, the idempotent $p_n$ splits,
  implying the existence of the modules $N_n(M)$ and $D_n(M)$.
\end{lemma}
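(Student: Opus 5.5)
The plan is to split $p_n$ by induction, writing it as a product of idempotents each of which splits through a retraction. Weak idempotent completeness only gives kernels of retractions, so I must produce retractions whose kernels are the relevant images. The basic observation: if $e=gf$ with $fg=1$ (so $f$ is a retraction with section $g$), then $e$ is idempotent. Let $K=\ker f$ with inclusion $i:K\to A$. Since $f(1-e)=f-fgf=0$, the map $1-e$ factors as $1-e=ir$ for a unique $r:A\to K$. Then $iri=(1-e)i=i$, and because $i$ is monic, $ri=1_K$. Hence $1-e$ splits through $K$ and $e$ splits through the object $N$ (the codomain of $f$). Conversely, an idempotent $p$ splits as soon as $1-p$ splits, since $\CA$ is additive enough here: if $1-p=ab$ with $ba=1$, the same argument applied to the retraction $b$ gives $\ker b$, and one checks that $\ker b$ splits $p$. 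So the general principle is: \emph{an idempotent of the form $gf$ with $fg=1$ has split complement}, and an idempotent splits iff its complement does.

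Next I apply this to the factors of $p_n$. From the simplicial identity $\partial_{n,i}s_{n-1,i-1}=1$, each factor $e_i=s_{n-1,i-1}\partial_{n,i}$ (for $1\le i\le n$) is of this form, so $1-e_i$ splits; its image is $\ker\partial_{n,i}$. However, $p_n$ is a product of such factors, and a product of split idempotents need not split. To handle this I would proceed inductively, using the partial products $p_n^{(k)}=(1-s_{n-1,k-1}\partial_{n,k})\cdots(1-s_{n-1,n-1}\partial_{n,n})$. The idea is to show that $p_n^{(k)}$ is the idempotent projecting onto $\bigcap_{i\ge k}\ker\partial_{n,i}$. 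Assuming inductively that $p_n^{(k+1)}$ splits through an object $N^{(k+1)}$ with maps $\iota,\pi$, I restrict to $N^{(k+1)}$. The composite $\partial_{n,k}\iota$, paired with $\pi s_{n-1,k-1}$ or a suitable corrected section, should again be a retraction. Its kernel then splits the induced idempotent $\pi(1-s_{n-1,k-1}\partial_{n,k})\iota$ on $N^{(k+1)}$, and composing the splittings splits $p_n^{(k)}$.

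The main obstacle is the inductive step: I must verify that $s_{n-1,k-1}$ preserves the image of $p_n^{(k+1)}$ up to the right correction, so that the restricted map really is a retraction. This relies on the relations $\partial_{n,j}s_{n-1,k-1}=s_{n-2,k-1}\partial_{n-1,j-1}$ for $j>k$. If that route becomes messy, I would instead find a single retraction. One candidate is $M_n\to\bigoplus_{i}M_{n-1}$ built from the face maps, but direct sums need not exist. So the fallback is to work with a single degenerate map: use that $1-p_n$ has the form $\sigma\delta$ where $\delta:M_n\to M_n$ is itself expressed via $p$, and show $\delta\sigma$ is idempotent. I would then reduce to the standard fact that in a weakly idempotent complete category, an idempotent $e$ splits iff $e=gf$ for some $f,g$ with $fge=$ … ; that is, I would try to realize $1-p_n$ as $gf$ with $fg=1$ on some object built from $M_{n-1}$ by earlier induction on $n$.
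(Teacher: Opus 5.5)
Your preliminary facts are correct: an idempotent $gf$ with $fg=1$ has split complement, and in a weakly idempotent complete category $p$ splits iff $1-p$ does. Your plan of splitting the partial products $p_n^{(k)}$ one factor at a time, by restricting to a splitting object $N^{(k+1)}$ of $p_n^{(k+1)}$, also has the right shape.

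\textbf{The gap.} The inductive step as you set it up fails, and not for lack of a better section. For $n\ge2$ and $k<n$, the map $\partial_{n,k}\iota:N^{(k+1)}\to M_{n-1}$ is not a retraction at all. By the face relation $\partial_{n-1,i}\partial_{n,k}=\partial_{n-1,k}\partial_{n,i+1}$ ($k\le i\le n-1$) and $\partial_{n,i+1}\iota=0$, we get $\partial_{n-1,i}\partial_{n,k}\iota=0$. Any section would then force $\partial_{n-1,i}=0$ on all of $M_{n-1}$ for $k\le i\le n-1$, which fails in general (e.g.\ $\partial_{n-1,n-1}s_{n-2,n-2}=1$). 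Concretely, your candidate gives $\partial_{n,k}\iota\pi s_{n-1,k-1}=\partial_{n,k}p_n^{(k+1)}s_{n-1,k-1}=p_{n-1}^{(k)}$ rather than $1$. So the obstacle you flag cannot be resolved by checking that $s_{n-1,k-1}$ preserves the image of $p_n^{(k+1)}$; it does not. Your fallback paragraph trails off at the key statement and does not supply an alternative. Its mention of induction on $n$ points the right way, but it does not identify the object needed.

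\textbf{The missing idea: shrink the codomain.} Run the induction on $k$ simultaneously for all $n$. Then the splitting object $N^{(k)}_{n-1}=\bigcap_{i\ge k}\ker\partial_{n-1,i}$ of $p_{n-1}^{(k)}$ is already available, and $\partial_{n,k}$ maps $N^{(k+1)}_n$ into it. Conversely, $s_{n-1,k-1}$ maps $N^{(k)}_{n-1}$ back into $N^{(k+1)}_n$, because $\partial_{n,j}s_{n-1,k-1}=s_{n-2,k-1}\partial_{n-1,j-1}$ for $j>k$; equivalently, $p_n^{(k+1)}s_{n-1,k-1}=s_{n-1,k-1}p_{n-1}^{(k)}$. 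Since $\partial_{n,k}s_{n-1,k-1}=1$, this gives a split epimorphism $N^{(k+1)}_n\to N^{(k)}_{n-1}$ whose kernel is $N^{(k)}_n$.

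This is exactly the paper's proof, with $F_iM_n=N^{(n-i+1)}_n$: $F_{i+1}M_n$ is the kernel of $\partial_{n,n-i}:F_iM_n\to F_iM_{n-1}$, split by $s_{n-1,n-i-1}$. With this change your composition-of-splittings argument goes through. The complement $\pi s_{n-1,k-1}\partial_{n,k}\iota$ of your induced idempotent on $N^{(k+1)}_n$ then factors as $g'f'$ through $N^{(k)}_{n-1}$ with $f'g'=1$.
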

\begin{proof}
  Consider the filtration
  \begin{equation*}
    F_iM_n = \bigcap_{j=n-i+1}^n \ker\bigl( \partial_{n,j}:M_n\to M_{n-1} \bigr) \subset M_n .
  \end{equation*}
  Thus $F_iM_n=M_n$, and $F_nM_n=N_n(M)$.  The existence of $F_iM_n$
  is shown by induction on $i$: $F_{i+1}M_n$ is the kernel of the
  split epimorphism
  \begin{equation*}
    \partial_{n,n-i} : F_iM_n \to F_iM_{n-1} ,
  \end{equation*}
  with section $s_{n-1,n-i-1}:F_iM_{n-1} \to F_iM_n$.
\end{proof}

The following result of Dold and Puppe \cite{DP} is a reformulation of
the theorem of Dold \cite{Dold} and Kan \cite{Kan}. They state the
theorem when $\CA$ is abelian, but their proof applies more
generally. (Recall that a pre-additive category is additive if it has
finite sums.)
\begin{theorem}
  \label{DoldKan}
  Let $\CA$ be a weakly idempotent additive category. Let $M_\bullet$ be a
  simplicial module. Every element of $M_n$ has a unique decomposition
  \begin{equation*}
    x = \sum_{k=0}^n \sum_{0\le i_k<\cdots<i_1<n} s_{n-1,i_1}\ldots s_{n-k,i_k}x_{i_1\ldots i_k} ,
  \end{equation*}
  where $x_{i_1\ldots i_k} \in N_{n-k}(M)$.
\end{theorem}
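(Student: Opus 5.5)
The plan is to prove the decomposition by induction on $n$, using the splitting $M_n = N_n(M)\oplus D_n(M)$ from the preceding lemma. Since $\CA$ is only weakly idempotent complete and additive, ``elements'' should be read as generalized elements; equivalently, I will prove that the morphism
\begin{equation*}
  \Phi_n : \bigoplus_{k=0}^n \bigoplus_{0\le i_k<\cdots<i_1<n} N_{n-k}(M) \to M_n ,
\end{equation*}
whose component on the summand indexed by $(i_1,\ldots,i_k)$ is $s_{n-1,i_1}\ldots s_{n-k,i_k}$ restricted to $N_{n-k}(M)$, is an isomorphism. The base case $n=0$ holds because $N_0(M)=M_0$. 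The finite sums exist because $\CA$ is additive, and the $N_{n-k}(M)$ exist by the lemma.

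\textbf{Existence (surjectivity).} First I show $D_n(M)=\sum_i \im s_{n-1,i}$ is the image of $1-p_n$. Expanding $p_n$ writes $1-p_n$ as a sum of terms each beginning on the left with some $s_{n-1,i}$, so $\im(1-p_n)\subset\sum_i\im s_{n-1,i}$. Conversely, $p_n s_{n-1,i}=0$, which follows from the simplicial identities by the standard Dold--Puppe argument: commute the factor $1-s_{n-1,i}\partial_{n,i+1}$ past the others and use $\partial_{n,i+1}s_{n-1,i}=1$. Next, write $x=p_nx+\sum_i s_{n-1,i}y_i$ and apply the inductive hypothesis to each $y_i\in M_{n-1}$. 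This expresses $x$ in terms of iterated degeneracies $s_{n-1,i}s_{n-2,j_1}\cdots$ of normalized elements, with the indices $j$ in decreasing order but not necessarily below $i$. I then reorder each word into the normal form $i_1>\cdots>i_k$ using the relation $s_{n,j}s_{n-1,k}=s_{n,k+1}s_{n-1,j}$ for $j\le k$. To avoid a messy normal-form bookkeeping, I will instead induct more cleverly: the factorization of $p_n$ into the operators $1-s_{n-1,i}\partial_{n,i+1}$ peels off degeneracies in a fixed order, so I will use the filtration $F_iM_n$ of the lemma and show $F_iM_n = F_{i+1}M_n\oplus s_{n-1,n-i-1}F_iM_{n-1}$, then iterate.

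\textbf{Uniqueness (injectivity).} I will use the characterization of $N_n(M)$ as $\bigcap_{i\ge1}\ker\partial_{n,i}$. Suppose a sum $\sum s_{I}x_I$ vanishes. Choose the multi-index $I=(i_1>\cdots>i_k)$ with nonzero $x_I$ that is maximal for a suitable order, say lexicographic on the reversed sequence with smallest $k$ first. Then apply the composite face map $\partial_{n-k+1,i_k+1}\cdots\partial_{n,i_1+1}$, which is left inverse to $s_I$. Using the face--degeneracy relations, every other term either maps to a face $\partial_{m,j}$ with $j\ge1$ of a normalized element, hence to $0$, or to a degeneracy of something; careful ordering kills all of them, yielding $x_I=0$.

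The main obstacle is this triangularity bookkeeping: I must verify that the chosen order makes the face composite annihilate all other terms, which is exactly where $0\notin\{i_j+1\}$ matters (since $\partial_{\cdot,0}$ does not vanish on $N$). Rather than grind through it, I would try to reduce uniqueness to the inductive statement via the split short exact pieces $F_{i+1}M_n\to F_iM_n\to F_iM_{n-1}$ from the lemma. These give the direct sum decomposition level by level, so both existence and uniqueness follow from the iterated splitting together with the inductive hypothesis applied to $F_iM_{n-1}$. This last step needs the decomposition for $M_{n-1}$ restricted to the filtration, and that compatibility is the point I expect to be delicate.
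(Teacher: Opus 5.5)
\textbf{Verdict.} Your plan works, and your final route is essentially the paper's own decomposition. The one step you leave open closes easily once the induction hypothesis is strengthened.

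\textbf{Comparison with the paper.} The paper's proof is $x=p_nx+\sum_{i=1}^n s_{n-1,i-1}\partial_{n,i}p_{n,i}x$ plus induction on $n$, which is essentially your first existence argument. Since $\partial_{n,k}p_{n,i}=0$ for $k>i$, the operator $p_{n,i}$ is the projection of $M_n$ onto $F_{n-i}M_n$ obtained by composing the projections $1-s_{n-1,j}\partial_{n,j+1}$ of the splittings in the Lemma. Moreover $\partial_{n,i}p_{n,i}x\in F_{n-i}M_{n-1}$. So the paper's formula is exactly your iterated splitting
\begin{equation*}
  M_n = N_n(M)\oplus\bigoplus_{i=1}^n s_{n-1,i-1}F_{n-i}M_{n-1} .
\end{equation*}
The paper stops at ``the result follows''. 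It argues only existence, and tacitly needs either your reordering step or the fact that elements of $F_{n-i}M_{n-1}$ decompose using only $s_{n-2,j}$ with $j<i-1$. Your direct-sum formulation gives existence and uniqueness at once, and that is what it buys.

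\textbf{Closing the ``delicate'' step.} Put the refined statement into the induction hypothesis. For $m\ge0$ and $0\le i\le m$, claim that the restriction of $\Phi_m$ to the summands indexed by $m-i>i_1>\cdots>i_k\ge0$ is an isomorphism onto $F_iM_m$. Induct on $m$, and for fixed $m$ downward on $i$. The starting case is $F_mM_m=N_m(M)$, where only the empty word occurs. For $i<m$, use $F_iM_m=F_{i+1}M_m\oplus s_{m-1,m-i-1}F_iM_{m-1}$:
\begin{itemize}
\item the first summand accounts for the words with $i_1<m-i-1$;
\item the second summand, by the hypothesis for $F_iM_{m-1}$ (whose words have first index $<m-1-i$), accounts for exactly the words with $i_1=m-i-1$;
\item since $s_{m-1,m-i-1}$ has left inverse $\partial_{m,m-i}$, it carries the decomposition of $F_iM_{m-1}$ isomorphically onto that summand.
\end{itemize}
Taking $i=0$ gives the theorem, with no reordering via $s_{n,j}s_{n-1,k}=s_{n,k+1}s_{n-1,j}$ needed.

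\textbf{The triangularity argument.} You can drop it. As sketched it is incomplete, because the order really matters. For example, $\partial_{2,2}$, the left inverse of $s_{1,1}$, sends $s_{1,1}s_{0,0}x_{10}$ to $s_{0,0}x_{10}\neq0$. If you want that route anyway, apply the single faces $\partial_{n,j+1}$ for $j=n-1,\dots,0$ in turn. Once the words with $i_1>j$ are known to vanish, $\partial_{n,j+1}$ kills $x_\emptyset$ and every word with $i_1<j$. It also strips $s_{n-1,j}$ from the words with $i_1=j$, so uniqueness in $M_{n-1}$ finishes the step.
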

\begin{proof}
  The proof is by induction on $n$: the case $n=0$ is trivial, since
  $M_0=N_0(M)$. The induction step is obtained by writing $x\in M_n$ as
  \begin{equation*}
    p_nx + (1-p_n)x = p_nx + \sum_{i=1}^n s_{n-1,i-1}\partial_{n,i}p_{n,i}x .
  \end{equation*}
  Since $\partial_{n,i}p_{n,i}x\in M_{n-1}$, the result follows.
\end{proof}

We call this the Dold--Kan decomposition.

\begin{definition}
  Let $M_\bullet$ be a duplicial module. The \textbf{Karoubi operator} is
  the endomorphism of $M_n$
  \begin{equation*}
    \kappa_n = (-1)^n \bigl( \partial_{n+1,0} s_{n,n+1} - s_{n-1,n}\partial_{n,0} \bigr) .
  \end{equation*}
The \textbf{Dwyer--Kan operator} is the endomorphism of $M_n$
  \begin{equation*}
    \pi_n = (-1)^n \partial_{n+1,0} \kappa_{n+1}^n s_{n,n+1} .
  \end{equation*}
\end{definition}

The definitions of these operators are simplified when they are
expressed in terms of the operators $\delta_n=\partial_{n,0}$ and
$\sigma_n=(-1)^ns_{n,n+1}$:
\begin{equation*}
  \kappa_n = \delta_{n+1} \sigma_n + \sigma_{n-1}\delta_n
\end{equation*}
and
\begin{equation*}
  \pi_n = \delta_{n+1} \kappa_{n+1}^n \sigma_n .
\end{equation*}

The restriction of $\pi_n$ to $N_n(M)$ was studied by Dwyer and Kan
\cite{DK}, and further studied in the special case of cyclic modules
by Cuntz and Quillen \cite{CQ}.

The purpose of this article is to give a new proof of a result of
Dwyer and Kan \cite{DK}*{Corollary 6.6}.
\begin{theorem}
  \label{DwyerKan}
  Let $\CA$ be a weakly idempotent additive category.
  \begin{enumerate}[i)]
  \item The duplicial module $M_\bullet$ is paracyclic if and only if the
    restriction of the Karoubi operator $\kappa_n$ to $N_n(M)$ is
    invertible for all $n\ge0$.
  \item The duplicial module $M_\bullet$ is cyclic if and only if the
    restriction of the Dwyer--Kan operator $\pi_n$ to $N_n(M)$ is the
    identity for all $n\ge0$.
  \end{enumerate}
\end{theorem}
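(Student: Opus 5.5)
The approach is to tie the Karoubi operator to the operator $t_n$ and to the Dold--Puppe projection $p_n$. Paracyclic means that each $t_n$ is invertible, since $\Lambda_\infty$ is obtained from $\Lambda_+$ by inverting the $\tau_n$. Cyclic means in addition that $\T=t_n^{n+1}=1$. So part i) should follow from a relation expressing $\kappa_n$ through $t_n$ modulo degenerate chains, and part ii) from a formula identifying $\pi_n$ with $\T$ on normalized chains.

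The first step is to compute $\kappa_n$ on the unnormalized complex using the duplicial relations. We have $\partial_{n+1,0}s_{n,n+1}=t_n$ (the case $k-j=n+1$), so
\begin{equation*}
  \kappa_n=(-1)^n\bigl(t_n - s_{n-1,n}\partial_{n,0}\bigr).
\end{equation*}
Next, using $t_{n}s_{n-1,i}=s_{n-1,i-1}t_{n-1}$ and $\partial_{n,i}t_n=t_{n-1}\partial_{n,i+1}$, I would check that $\kappa_n$ preserves $D_n(M)$, and that it maps $N_n(M)$ into $M_n$ in such a way that $p_n\kappa_n$ restricts to an endomorphism of $N_n(M)$. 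The key identity I aim for is
\begin{equation*}
  p_n\kappa_n|_{N_n}=\kappa_n|_{N_n} \quad\text{and}\quad \kappa_n^{\,n+1}|_{N_n}=\T|_{N_n}\quad\text{(up to a projection)}.
\end{equation*}
More precisely, I expect $\kappa_n^{n+1}=\T-(\text{terms in the image of }\delta,\sigma \text{ and } \T)$, in the spirit of Cuntz--Quillen's $\kappa^{n+1}=1$ on $N_n$ plus boundary corrections. The cleanest route is to show that $p_n\,t_n^{k}\,p_n=(\pm\kappa_n)^k p_n$ for $0\le k\le n+1$, by induction on $k$, using the Dold--Kan decomposition of Theorem~\ref{DoldKan} to discard degenerate pieces.

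For i), the forward direction is then clear. If $t_n$ is invertible, so is $\T$, and $\T$ commutes with the faces, so it preserves $N_n(M)$. The identity $\kappa_n^{n+1}=\T$ on $N_n(M)$ then shows that $\kappa_n|_{N_n}$ is invertible. For the converse, suppose $\kappa_m|_{N_m}$ is invertible for all $m$. Since $\T$ commutes with all $s$ and $\partial$, it is compatible with the Dold--Kan decomposition, so $\T$ is invertible on $M_n$ as soon as it is invertible on each $N_m$. On $N_m$, $\T$ agrees with $\kappa_m^{m+1}$ (after projecting), hence is invertible there. Once $\T=t_n^{n+1}$ is invertible, $t_n$ is invertible, with inverse $t_n^{n}\T^{-1}$.

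For ii), I would compute $\pi_n=\delta_{n+1}\kappa_{n+1}^n\sigma_n$ on $N_n$ and show that it equals $\T$ restricted to $N_n$. Here $\sigma_n$ maps $N_n$ into a piece of $M_{n+1}$. Applying $\kappa_{n+1}^n$ shifts that piece cyclically, and $\delta_{n+1}$ returns to $M_n$. Each application of $\kappa$ acts like $\pm t$ modulo terms killed by the final $\delta_{n+1}$ and by the restriction to $N_n$. This gives $\pi_n|_{N_n}=t_n^{n+1}|_{N_n}=\T|_{N_n}$. Then $\T=1$ on every $N_m$ is equivalent, via the Dold--Kan decomposition and the commutation of $\T$ with the degeneracies, to $\T=1$ on every $M_n$, which is cyclicity.

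The main obstacle is the bookkeeping in the identity relating powers of $\kappa$ to powers of $t$ modulo degenerate chains. I would first verify it in low degrees ($n=0,1$) and then organize the induction around the filtration $F_iM_n$ from the Lemma. The idea is to show that $\kappa_n^k$ agrees with $(-1)^{nk}t_n^k$ on $F_{n+1-k}$-type pieces up to terms in the image of $s_{n-1,n}$, which the subsequent steps annihilate.
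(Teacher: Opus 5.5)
Your argument for i) depends on the identity $\kappa_n^{n+1}=\T$ on $N_n(M)$, which you intend to get from $p_nt_n^kp_n=(\pm\kappa_n)^kp_n$. That identity is false, with or without a projection, and so is the intermediate one.

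It already fails at $k=1$. Since $\kappa_n=(-1)^n(t_n-s_{n-1,n}\partial_{n,0})$ preserves $N_n(M)$ (Lemma~\ref{pk}), one has
\begin{equation*}
  p_nt_np_n=(-1)^n\kappa_np_n+p_ns_{n-1,n}\partial_{n,0}p_n .
\end{equation*}
The image of the extra degeneracy $s_{n-1,n}$ is not killed by $p_n$, which only kills the $s_{n-1,i}$ with $i<n$. Nor is it killed by further powers of $\kappa_n$, since Lemma~\ref{sk} gives $\kappa_ns_{n-1,n}=(s_{n-1,n}-s_{n-1,n-1})\kappa_{n-1}$. So the terms you plan to discard survive.

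Here is a concrete counterexample. Take the cyclic module $A^{\otimes(\bullet+1)}$ of a noncommutative algebra, with $\partial_{1,0}(a_0\otimes a_1)=a_0a_1$, $\partial_{1,1}(a_0\otimes a_1)=a_1a_0$, $s_{0,0}a=a\otimes1$, $s_{0,1}a=1\otimes a$ and $t_1(a_0\otimes a_1)=a_1\otimes a_0$. Let $x=p_1(a_0\otimes a_1)=a_0\otimes a_1-a_1a_0\otimes1\in N_1(M)$ and $c=[a_0,a_1]$. A direct computation gives
\begin{equation*}
  \kappa_1^2x-x=p_1t_1x+\kappa_1x=1\otimes c-c\otimes1\neq0 ,
\end{equation*}
although $\T x=x$. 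This is the Cuntz--Quillen relation $\kappa^{n+1}=1-db$ on normalized chains of a cyclic module. The correct general statement is $\kappa_n^{n+1}=\pi_n(1-d_{n-1}b_n)$. So on $N_n(M)$, $\kappa_n^{n+1}$ and $\T$ differ by a factor whose invertibility needs its own argument, and both directions of your proof of i) fail at this step.

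You are missing the two facts the paper establishes.

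\emph{First}, $\pi_n=\T_np_n$ on all of $M_n$, so $\pi_n|_{N_n}=\T|_{N_n}$. This is the right target for ii), but your heuristic points the wrong way.
\begin{itemize}
\item The word of $\delta_{n+1}\kappa_{n+1}^n\sigma_n$ in which every $\kappa_{n+1}$ contributes $\delta_{n+2}\sigma_{n+1}=(-1)^{n+1}t_{n+1}$ equals $(-1)^n\T s_{n-1,0}\partial_{n,n}$. This vanishes on $N_n(M)$ for $n\ge1$.
\item The $\T$ comes from the opposite word, $\delta_{n+1}(\sigma_n\delta_{n+1})^n\sigma_n=(\delta_{n+1}\sigma_n)^{n+1}=t_n^{n+1}$.
\item The paper handles all words at once. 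It rewrites each block as $(-1)^{j-i}t_n^i\partial_{n+1,0}t_{n+1}^{j-i}s_{n,n+1}=\Pi_{n-j,n-i}t_n^{j+1}$, where $\Pi_{p,q}=(-1)^{q-p}s_{n-1,p}\partial_{n,q}$. It then matches the sum with the expansion of the Dold--Puppe product $p_n$, using that $\Pi_{p,q}$ and $\Pi_{r,s}$ commute when $s<p$.
\end{itemize}

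\emph{Second}, $\kappa_n=(1-b_{n+1}d_n)(1-d_{n-1}b_n)$ and $\pi_n=(1-b_{n+1}d_n)^{n+1}(1-d_{n-1}b_n)^n$, with commuting factors. Since $bp=pb$ and $pdp=pd$, these factorizations pass to $N_\bullet(M)$ with $d$ replaced by $pd$. Hence $\pi_n|_{N_n}$ is invertible if and only if $\kappa_n|_{N_n}$ is.

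With these two facts, your outer reductions finish the proof. Paracyclic means every $t_n$, equivalently every $\T$, is invertible. By the Dold--Kan decomposition, $\T$ is invertible (resp.\ equal to $1$) on all $M_n$ if and only if it is so on all $N_m(M)$.
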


Unlike in the earlier work of Dwyer and Kan \cite{DK} and Cuntz and
Quillen \cite{CQ}, we consider the action of the operators $\kappa_n$ and
$\pi_n$ on all of $M_n$, and not only on $N_n(M)$. We show by direct
calculation that the operator $\pi_n$ equals the composition of the
projection $p_n$ of Dold and Puppe with the operator $\T_n$. In
particular, for cyclic modules, $\pi_n=p_n$.

Dwyer and Kan cast the Dold--Kan theorem as an equivalence between the
category of duplicial modules and the category of duchain complexes.
\begin{definition}
  A \textbf{duchain complex} is a graded module $M_\bullet$ with a pair of
  differentials $b_n:M_n\to M_{n-1}$ and $d_n:M_n\to M_{n+1}$.
\end{definition}

A chain complex $(V_\bullet,b)$ is a degenerate duchain complex, whose extra
differential $d$ vanishes. Thus, there is a natural embedding of the
category of simplicial modules to the category of duplicial (in fact,
cyclic) modules, given by taking the duplicial structure on the
simplicial module $M_\bullet$ induced by this degenerate duchain structure
on $N_\bullet(M)$. In this way, we obtain a new formula for the idempotent
$p_n$ of Dold and Puppe on $M_n$: it equals the Dwyer--Kan operator
$\pi_n$.

\section*{Acknowledgement}

This article would not have been possible without the assistance of
the National Park Service, and especially of the Lowney Creek
Campground in the Pictured Rocks National Lakeshore. The author thanks
Uppsala University and the Knut and Alice Wallenberg Foundation for
their support.

\section{Normalization of simplicial modules}
\label{normalization}

For $0\le i\le n$, consider the operators
\begin{equation*}
  p_{n,i} = (1-s_{n-1,i}\partial_{n,i+1})\ldots(1-s_{n-1,n-1}\partial_{n,n}) : M_n \to M_n .
\end{equation*}
Let $p_n=p_{n,0}$. We have
\begin{equation*}
  \partial_{n,k}(1-s_{n-1,j-1}\partial_{n,j}) =
  \begin{cases}
    0 , & k=j , \\
    (1-s_{n-2,j-1}\partial_{n-1,j})\partial_{n,k} , & k>j .
  \end{cases}
\end{equation*}
It follows that $\partial_{n,k}p_{n,i}=0$ for $i<k\le n$, and hence that
$p_n$ is idempotent (as are the operators $p_{n,i}$, $i>0$).  Since
$p_n^\circ=p_n$, it also follows that
$p_ns_{n-1,k}=(\partial_{n,n-k}p_n){}^\circ=0$ for $0\le k\le n-1$.

A simplicial module $M_\bullet$ gives rise to a chain complex: it carries
the differential
\begin{equation*}
  b_n = \sum_{i=0}^n (-1)^i \partial_{n,i} : M_n \to M_{n-1} .
\end{equation*}
We have
\begin{align*}
  b_{n-1}b_n &= \sum_{0\le j\le k\le n-1} (-1)^{j+k} \, \partial_{n-1,k} \partial_{n,j}
               + \sum_{0\le k< j\le n} (-1)^{j+k} \, \partial_{n-1,k} \partial_{n,j} \\
             &= \sum_{0\le j\le k\le n-1} (-1)^{j+k} \, \partial_{n-1,j} \partial_{n,k+1}
               + \sum_{0\le k< j\le n} (-1)^{j+k} \, \partial_{n-1,k} \partial_{n,j} = 0 .
\end{align*}
Note that the restriction of $b_n$ to $N_n(M)$ equals
$\partial_{n,0}$. This gives a formula for the action of the face map
$\delta_n=\partial_{n,0}$ in the Dold--Kan decomposition:
\begin{equation*}
  \delta_n \bigl( s_{n-1,i_1}\dots s_{n-k,i_k} x_{i_1\ldots i_k} \bigr) =
  \begin{cases}
    s_{n-2,i_1-1} \ldots s_{n-k,i_{k-1}-1} x_{i_1\ldots i_k} , & i_k=0 \\
    s_{n-2,i_1-1}\dots s_{n-k-1,i_k-1} ( b_{n-k}x_{i_1\ldots i_k} ) , &
    i_k>0 .
  \end{cases}
\end{equation*}

\begin{proposition}
  $b_np_n=p_{n-1}b_n$
\end{proposition}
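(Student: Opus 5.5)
Both sides of $b_np_n=p_{n-1}b_n$ will be identified with $\delta_np_n$ (recall $\delta_n=\partial_{n,0}$), using the facts established just above: $\partial_{n,k}p_n=0$ for $1\le k\le n$, and $p_ns_{n-1,k}=0$ for $0\le k\le n-1$.

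For the left side, the vanishing of the faces $\partial_{n,k}$, $k\ge1$, on the image of $p_n$ gives immediately
\begin{equation*}
  b_np_n=\sum_{i=0}^n(-1)^i\partial_{n,i}p_n=\partial_{n,0}p_n=\delta_np_n .
\end{equation*}

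For the right side, decompose the identity of $M_n$ as $1=p_n+(1-p_n)$. Expanding the product defining $p_n$, we have $1-p_n=\sum_{i=1}^n s_{n-1,i-1}\partial_{n,i}p_{n,i}$, as in the proof of Theorem~\ref{DoldKan}. Hence
\begin{equation*}
  p_{n-1}b_n=p_{n-1}b_np_n+\sum_{i=1}^n p_{n-1}b_ns_{n-1,i-1}\partial_{n,i}p_{n,i} .
\end{equation*}

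For the first term, $b_np_n=\delta_np_n$ by the computation above. Moreover $\partial_{n-1,k}\delta_np_n=\partial_{n-1,0}\partial_{n,k+1}p_n=0$ for $1\le k\le n-1$, by the simplicial identity. So $\delta_np_n$ lies in $N_{n-1}(M)$, and $p_{n-1}$, being the projection onto $N_{n-1}(M)$, fixes it. (Directly: each factor $1-s\partial_{n-1,k}$ acts as the identity on it.) The first term is therefore $\delta_np_n$.

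The main step is to show that each term of the sum vanishes, i.e.\ that $p_{n-1}b_ns_{n-1,j}=0$ for $0\le j\le n-1$. I would expand $b_ns_{n-1,j}$ using the face–degeneracy relations. The two terms $i=j$ and $i=j+1$ both give the identity and cancel with opposite signs. Each remaining term has the form $\pm s_{n-2,\ast}\partial_{n-1,\ast}$: a degeneracy applied after a face. (No wrap-around $t$ term occurs, since the duplicial relation producing $t_n$ requires $k-j=n+1$, which is out of range for $\Delta$ indices.) Since $p_{n-1}s_{n-2,k}=0$ for all $0\le k\le n-2$, every remaining term is killed by $p_{n-1}$. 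The only care needed is the index bookkeeping, checking that every surviving degeneracy index $k$ lies in $[0,n-2]$. Alternatively, one can note the standard fact that $b$ preserves the degenerate subcomplex $D_\bullet(M)$, which $p_{n-1}$ annihilates.

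Combining, $p_{n-1}b_n=\delta_np_n=b_np_n$.
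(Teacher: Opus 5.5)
Your proof is correct, but it takes a different route from the paper's. You argue structurally. Using the telescoped splitting $1=p_n+\sum_{i=1}^n s_{n-1,i-1}\partial_{n,i}p_{n,i}$, you show two things. First, $b_np_n=\delta_np_n$ already has $\partial_{n-1,k}\delta_np_n=0$ for $k\ge1$, so $p_{n-1}$ fixes it. Second, $b_ns_{n-1,j}$ is a combination of terms $s_{n-2,k}\partial_{n-1,\ast}$ with $0\le k\le n-2$, so $p_{n-1}$ kills it. Your index bookkeeping is right, including the cancellation of the terms $i=j,j+1$ and the absence of any $t$ term.

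The paper never invokes the decomposition $M_n=N_n(M)\oplus D_n(M)$ here. It proves, by downward induction on $i$, the finer identity $b_np_{n,i}=p_{n-1,i-1}b_n$ for $i\ge1$ for the partial products. It peels off one factor $1-s_{n-1,i}\partial_{n,i+1}$ at a time, using only $\partial_{n,k}p_{n,i}=0$ for $k>i$ and the simplicial identities, and finishes with $b_np_{n,0}=b_np_{n,1}$.

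Your version is shorter and explains why the identity holds: $b$ preserves both summands, so it commutes with the projection. It also identifies the common value as $\delta_np_n$. On the other hand, it needs the extra input $p_{n-1}s_{n-2,k}=0$, which the paper obtains from the anti-involution, and it only gives the case $i=0$. The paper's stronger statement for every $p_{n,i}$ is exactly what the next proposition uses, in the step $s_{n-1,i-1}b_np_{n,i}=s_{n-1,i-1}p_{n-1,i-1}b_n$ of the proof of Epstein's homotopy formula. So your argument proves the proposition but could not replace the paper's proof for that later use.
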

\begin{proof}
  We prove by downward induction on $i$ that
  $b_np_{n,i}=p_{n-i,i-1}b_n$. For the induction step, note that
  \begin{align*}
    b_np_{n,i}
    &= \sum_{k=0}^{i-1} (-1)^k \partial_{n,k}p_{n,i} + (-1)^i \partial_{n,i}p_{n,i} \\
    &= \sum_{k=0}^{i-1} (-1)^k \partial_{n,k}(1-s_{n-1,i}\partial_{n,i+1})p_{n,i+1} +
      (-1)^i \partial_{n,i}(1-s_{n-1,i}\partial_{n,i+1})p_{n,i+1} \\
    &= \sum_{k=0}^{i-1} (-1)^k (1-s_{n-2,i-1}\partial_{n-1,i})\partial_{n,k}p_{n,i+1} +
      (-1)^i ( \partial_{n,i} - \partial_{n,i+1})p_{n,i+1} .
  \end{align*}
  If $i=0$, this equals $b_np_{n,1}=p_{n-1,0}$, while if $i>0$, it
  equals
  \begin{multline*}
    \sum_{k=0}^{i+1} (-1)^k (1-s_{n-2,i-1}\partial_{n-1,i})\partial_{n,k}p_{n,i+1}
    + (-1)^i s_{n-2,i-1}\partial_{n-1,i}( \partial_{n,i} - \partial_{n,i+1})p_{n,i+1} \\
    = (1-s_{n-2,i-1}\partial_{n-1,i})b_np_{n,i+1} +
    (-1)^i s_{n-2,i-1}\partial_{n-1,i}( \partial_{n,i} - \partial_{n,i+1})p_{n,i+1} .
  \end{multline*}
  The first term equals
  $(1-s_{n-2,i-1}\partial_{n-1,i})p_{n-1,i+1}b_n=p_{n-1,i-1}b_n$, while the
  second vanishes, since $\partial_{n-1,i}\partial_{n,i} = \partial_{n-1,i}\partial_{n,i+1}$.
\end{proof}

It was proved by Eilenberg and Mac Lane \cite{EM1} that the complexes
$(M_\bullet,b)$ and $(N_\bullet(M),b)$ are homotopy equivalent. The following
expression for their homotopy is taken from Epstein
\cite{Epstein}*{Proposition 2.3}.
\begin{proposition}
  Let $M_\bullet$ be a simplicial module. The operator
  \begin{equation*}
    \varphi_n = \sum_{i=0}^n (-1)^i s_{n,i} p_{n,i} : M_n \to M_{n+1}
  \end{equation*}
  satisfies $b_{n+1}\varphi_n + \varphi_{n-1}b_n = p_n - 1$.
\end{proposition}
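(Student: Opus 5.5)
We prove the identity by downward induction on $i$, using an auxiliary family of partial homotopies. For $0\le i\le n$ set
\begin{equation*}
  \varphi_{n,i} = \sum_{j=i}^{n} (-1)^{j} s_{n,j} p_{n,j} : M_n \to M_{n+1} ,
\end{equation*}
so that $\varphi_n=\varphi_{n,0}$, and let $p_{n,n}=1$ (the empty product). The aim is an identity of the form
\begin{equation*}
  b_{n+1}\varphi_{n,i} + \varphi_{n-1,i}b_n = (-1)^i\Bigl(\text{correction}_i\Bigr) + p_{n,i} - 1 ,
\end{equation*}
where the correction term involves only the faces $\partial_{n,k}$ with $k<i$. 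It should vanish at $i=0$, and this gives the proposition. Rather than guessing the correction in closed form, I would compute the increment $b_{n+1}(-1)^i s_{n,i}p_{n,i} + (-1)^i s_{n-1,i}p_{n-1,i}(\text{faces of } b_n)$ at each step and telescope.

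\textbf{Step 1.} Expand $b_{n+1}s_{n,i}=\sum_k(-1)^k\partial_{n+1,k}s_{n,i}$ using the simplicial identities. The terms with $k=i,i+1$ give $1$ and $1$ with opposite signs, so they cancel. The terms with $k<i$ give $s_{n-1,i-1}\partial_{n,k}$, and those with $k>i+1$ give $s_{n-1,i}\partial_{n,k-1}$. Hence
\begin{equation*}
  b_{n+1}s_{n,i}p_{n,i}=\sum_{k<i}(-1)^k s_{n-1,i-1}\partial_{n,k}p_{n,i}
  -\sum_{k>i}(-1)^{k} s_{n-1,i}\partial_{n,k}p_{n,i}.
\end{equation*}
The second sum vanishes, because $\partial_{n,k}p_{n,i}=0$ for $k>i$, as shown at the start of this section. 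So only the faces with $k<i$ survive. The same computation with $b_{n+1}s_{n,n}$ for $i=n$ picks up a stray $\pm\partial_{n+1,n+1}s_{n,n}=\pm1$; this is the source of the $-1$ on the right-hand side.

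\textbf{Step 2.} Expand the companion term $s_{n-1,i}p_{n-1,i}b_n$ using the same commutation rules as in the proof that $b_np_n=p_{n-1}b_n$. That proof established $\partial_{n,k}p_{n,i}=(\text{some } p_{n-1,\cdot})\partial_{n,k}$ for $k<i$, together with the vanishing of the top faces. The point is to pair $(-1)^i s_{n-1,i-1}\partial_{n,k}p_{n,i}$ from Step 1 (at index $i$) with $(-1)^{i-1}s_{n-1,i-1}p_{n-1,i-1}\partial_{n,k}$ from $\varphi_{n-1}b_n$. The difference between these should be exactly
\begin{equation*}
  s_{n-1,i-1}\partial_{n,i}p_{n,i}=p_{n,i}-p_{n,i-1},
\end{equation*}
which follows from $p_{n,i-1}=(1-s_{n-1,i-1}\partial_{n,i})p_{n,i}$. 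Summing over $i$ then telescopes to $p_{n,0}-p_{n,n}=p_n-1$.

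The main obstacle is the sign and index bookkeeping in Step 2. One must check that the cross terms $s_{n-1,j}p_{n-1,j}\partial_{n,k}$ with $k\le j$ cancel against the Step 1 terms $s_{n-1,j}\partial_{n,k}p_{n,j+1}$, up to the telescoping remainder. I would handle this by first proving the single-step identity
\begin{equation*}
  p_{n-1,j}\partial_{n,k}=\partial_{n,k}p_{n,j+1}\quad (k\le j),
\end{equation*}
which follows from the first displayed commutation formula of this section by induction on the number of factors. After that, Steps 1 and 2 cancel termwise, and checking the small cases $n=0,1$ confirms the signs.
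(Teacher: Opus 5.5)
Your overall structure is the paper's: compute $b_{n+1}s_{n,i}p_{n,i}$, note that only the faces $k<i$ survive, pair the result with the term $(-1)^{i-1}s_{n-1,i-1}p_{n-1,i-1}b_n$ of $\varphi_{n-1}b_n$, and telescope using $s_{n-1,i-1}\partial_{n,i}p_{n,i}=p_{n,i}-p_{n,i-1}$. However, Step~2 does not close as you propose to carry it out, so there is a genuine gap.

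Your identity $p_{n-1,j}\partial_{n,k}=\partial_{n,k}p_{n,j+1}$ for $k\le j$ is correct. It makes the Step~1 terms cancel \emph{exactly} against the faces $k\le i-1$ of $(-1)^{i-1}s_{n-1,i-1}p_{n-1,i-1}b_n$, so there is no remainder there. What is left are the high faces
\begin{equation*}
  (-1)^{i-1}s_{n-1,i-1}p_{n-1,i-1}\sum_{k=i}^{n}(-1)^k\partial_{n,k}.
\end{equation*}
These have no partner in Step~1, since those faces were killed by $\partial_{n,k}p_{n,i}=0$. Your plan never treats them, yet they are exactly the source of $p_{n,i-1}-p_{n,i}$. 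Identifying them requires
\begin{equation*}
  p_{n-1,i-1}\sum_{k=i}^{n}(-1)^k\partial_{n,k}=(-1)^i\partial_{n,i}p_{n,i},
\end{equation*}
and this fails term by term. For $n=2$, $i=1$ one has $p_{1,0}\partial_{2,2}=\partial_{2,2}-s_{0,0}\partial_{1,1}\partial_{2,2}$, which is nonzero in general, while $\partial_{2,2}p_{2,1}=0$. Only the alternating sum works, via the relation $\partial_{n-1,i}\partial_{n,i}=\partial_{n-1,i}\partial_{n,i+1}$ and a downward induction. In other words, you need the identity $b_np_{n,i}=p_{n-1,i-1}b_n$, proved inside the proof of $b_np_n=p_{n-1}b_n$.

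This is precisely what the paper uses. It adds back the vanishing high faces to rewrite the Step~1 sum as $s_{n-1,i-1}b_np_{n,i}-(-1)^is_{n-1,i-1}\partial_{n,i}p_{n,i}$. It then applies $b_np_{n,i}=p_{n-1,i-1}b_n$ to get, for $1\le i\le n$ (the $i=0$ term being zero),
\begin{equation*}
  b_{n+1}s_{n,i}p_{n,i}=s_{n-1,i-1}p_{n-1,i-1}b_n+(-1)^i(p_{n,i-1}-p_{n,i}),
\end{equation*}
and sums over $i$. Checking $n=0,1$ cannot substitute for this step.

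There are also two smaller errors. First, Step~1 has no stray $\pm\partial_{n+1,n+1}s_{n,n}$ at $i=n$. The faces $k=n,n+1$ are the pair $k=i,i+1$, which cancel like every other such pair. The $-1$ comes only from $p_{n,n}=1$ at the end of the telescoping, and an extra $\pm1$ would make the identity false. Second, your single-step identity does not follow from the displayed formula for $\partial_{n,k}(1-s_{n-1,j-1}\partial_{n,j})$, which covers only $k\ge j$. You need the other case $k\le j-2$, where $\partial_{n,k}(1-s_{n-1,j-1}\partial_{n,j})=(1-s_{n-2,j-2}\partial_{n-1,j-1})\partial_{n,k}$; this is an equally short check from the simplicial identities.
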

\begin{proof}
  For $0\le i\le n$,
  \begin{align*}
    b_{n+1} s_{n,i} p_{n,i}
    &= \sum_{k=0}^{i-1} (-1)^k \partial_{n+1,k} s_{n,i} p_{n,i}
      + (-1)^i ( \partial_{n+1,i} - \partial_{n+1,i+1} ) s_{n,i} p_{n,i} \\
    & \quad + \sum_{k=i+2}^{n+1} (-1)^k \partial_{n+1,k} s_{n,i} p_{n,i} \\
    &= \sum_{k=0}^{i-1} (-1)^k s_{n-1,i-1} \partial_{n,k} p_{n,i}
      + \sum_{k=i+2}^{n+1} (-1)^k s_{n-1,i}  \partial_{n,k-1} p_{n,i} \\
    &= \sum_{k=0}^n (-1)^k s_{n-1,i-1} \partial_{n,k} p_{n,i}
      - (-1)^i s_{n-1,i-1} \partial_{n,i} p_{n,i} \\
    &= s_{n-1,i-1} b_n p_{n,i} + (-1)^i ( p_{n,i-1} - p_{n,i} ) \\
    &= s_{n-1,i-1} p_{n-1,i-1} b_n + (-1)^i ( p_{n,i-1} - p_{n,i} ) .
  \end{align*}
  Summing over $i$, the result follows
\end{proof}

In Section~\ref{section:DwyerKan}, we present an alternative homotopy
between $(M_\bullet,b)$ and $(N_\bullet(M),b)$, using the fact that a simplicial
module has a natural structure of a cyclic module.

\section{The Karoubi operator}

A duplicial module $M_\bullet$ gives rise to a duchain complex: it carries
the pair of differentials
\begin{align*}
  b_n &= \sum_{i=0}^n (-1)^i \partial_{n,i} : M_n \to M_{n-1} , &
  d_n &= \sum_{i=0}^{n+1} (-1)^i s_{n,i} : M_n \to M_{n+1} .
\end{align*}
The differential $d$ was introduced by Dwyer and Kan \cite{DK}, and
generalizes Karoubi's noncommutative de~Rham differential (Cuntz and
Quillen \cite{CQ}).  The proof that it is a differential is formally
identical to the proof that $b$ is a differential, since $b_n$ and
$(-1)^n d_{n-1}$ are exchanged by the anti-involution $(-)^\circ$. (These
differentials are denoted $d$ and $\delta$ in \cite{DK}.)

The relation $s_{n,n+1}s_{n-1,i}=s_{n,i}s_{n-1,n}$, and the formula
for $d_n$, gives a formula for the action of the extra degeneracy
operator $\sigma_n=(-1)^n s_{n,n+1}$ on $M_n$ in the Dold--Kan decomposition:
\begin{multline*}
  \sigma_n \bigl( s_{n-1,i_1}\dots s_{n-k,i_k} x_{i_1\ldots i_k} \bigr)
  = - (-1)^k s_{n,i_1} \ldots s_{n-k+1,i_k} d_{n-k} x_{i_1\ldots i_k} \\
  + \sum_{\ell=0}^k \sum_{i_{\ell+1}<i<i_\ell} (-1)^{i-\ell} \, s_{n,i_1} \ldots
  s_{n-\ell+1,i_\ell} s_{n-\ell,i} s_{n-\ell-1,i_{\ell+1}} \ldots s_{n-k,i_k} x_{i_1\ldots i_k} .
\end{multline*}
(In this formula, $i_0$ stands for $n$ and $i_{k+1}$ stands for $-1$.)
Applying $(-1)^n\delta_{n+1}$ to both sides, we obtain the formula for
$t_n \bigl( s_{n-1,i_1}\dots s_{n-k,i_k} x_{i_1\ldots i_k} \bigr)$. There
are two cases:
\begin{align*}
  & \sum_{\ell=0}^k \sum_{i_{\ell+1}<i<i_\ell} (-1)^{n+i-\ell} \, s_{n-1,i_1-1} \ldots
    s_{n-\ell,i_\ell-1} s_{n-\ell-1,i-1} s_{n-\ell-2,i_{\ell+1}-1} \ldots s_{n-k-1,i_k-1}
    b_{n-k}x_{i_1\ldots i_k} \\
  & \qquad - (-1)^{n-k} s_{n-1,i_1-1} \ldots s_{n-k,i_k-1} b_{n-k+1}d_{n-k} x_{i_1\ldots
    i_k} , \text{ when $i_k>0$, and} \\
  & \sum_{\ell=0}^k \sum_{i_{\ell+1}<i<i_\ell} (-1)^{n+i-\ell} \, s_{n-1,i_1-1} \ldots
    s_{n-\ell,i_\ell-1} s_{n-\ell-1,i-1} s_{n-\ell-2,i_{\ell+1}-1} \ldots
    s_{n-k,i_{k-1}-1} x_{i_1\ldots i_k} \\
  & \qquad - (-1)^{n-k} s_{n-1,i_1-1} \ldots s_{n-k+1,i_{k-1}-1} d_{n-k}
  x_{i_1\ldots i_k} , \text{ when $i_k=0$.}
\end{align*}
The structure of a cyclic module on a simplicial module is the special
case where $d$ equal $0$. For example,
$t_1(x+s_{0,0}x_0) = -x+s_{0,0} ( b_1x + x_0 )$.

One reason that the Karoubi operator is important is that it may be
expressed in terms of $b$ and $d$.
\begin{proposition}
  $\kappa_n = 1 - b_{n+1}d_n - d_{n-1}b_n$
\end{proposition}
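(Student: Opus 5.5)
Expand both compositions $b_{n+1}d_n$ and $d_{n-1}b_n$ as double sums over the face and degeneracy relations, and show that everything cancels except the identity and the two terms making up $-\kappa_n$.

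First write
\begin{equation*}
  b_{n+1}d_n = \sum_{j=0}^{n+1}\sum_{k=0}^{n+1} (-1)^{j+k}\, \partial_{n+1,j}s_{n,k} , \qquad
  d_{n-1}b_n = \sum_{j=0}^{n}\sum_{k=0}^{n} (-1)^{j+k}\, s_{n-1,k}\partial_{n,j} ,
\end{equation*}
and sort the terms of $b_{n+1}d_n$ by the value of $k-j$, using the relations for $\partial_{n+1,j}s_{n,k}$ listed in the introduction.
\begin{enumerate}[i)]
\item The diagonal pairs $k-j\in\{0,-1\}$ give $\partial_{n+1,j}s_{n,k}=1$. There are $n+1$ pairs with $k=j\le n$, each with sign $+1$. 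There are $n+1$ pairs with $j=k+1$, each with sign $-1$. These cancel, except that the pair $(j,k)=(n+1,n+1)$ has $k-j=0$ but is not among the $0\le j\le k\le n$ pairs covered by the relation as written for $s_{n,k}$ with $k\le n$. So $(j,k)=(n+1,n+1)$ and $(j,k)=(0,n+1)$ must be treated separately.
\item The pair $k-j=n+1$, i.e.\ $(j,k)=(0,n+1)$, gives $(-1)^{n+1}\partial_{n+1,0}s_{n,n+1}=(-1)^{n+1}t_n$.
\item The pair $(n+1,n+1)$ gives $\partial_{n+1,n+1}s_{n,n+1}$, which should be $1$ by the relation with $k-j=0$. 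This term survives with sign $+1$ and produces the identity in the formula.
\item The off-diagonal pairs with $1\le k-j\le n$ give $s_{n-1,k-1}\partial_{n,j}$, and those with $k-j<-1$ give $s_{n-1,k}\partial_{n,j-1}$.
\end{enumerate}

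Reindex the off-diagonal terms in (iv), setting $k'=k-1$ in the first family and $j'=j-1$ in the second. Both become terms $s_{n-1,k'}\partial_{n,j'}$ with sign $-(-1)^{j'+k'}$. The first family covers exactly the pairs with $0\le j'\le k'\le n-1$, and the second covers $0\le k'<j'\le n$, except that the index ranges are cut off by $k\le n+1$ and $j\le n+1$. Together they should give $-d_{n-1}b_n$ minus the single missing term $(j',k')=(0,n)$, which is $-(-1)^n s_{n-1,n}\partial_{n,0}$. That term comes from the first family with $k=n+1$, $j=0$; but that pair was already used in (ii) as $k-j=n+1$.

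Collecting, $b_{n+1}d_n+d_{n-1}b_n = 1 + (-1)^{n+1}\partial_{n+1,0}s_{n,n+1} + (-1)^n s_{n-1,n}\partial_{n,0} = 1-\kappa_n$, as required. Equivalently, in the notation of the introduction, $\kappa_n=\delta_{n+1}\sigma_n+\sigma_{n-1}\delta_n$.

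The main obstacle is the index bookkeeping at the boundary. One must check carefully which pairs $(j,k)$ fall in which case of the relation, especially $k=n+1$, where $s_{n,n+1}$ is the extra degeneracy. In particular, the check is that the sets of pairs in each case really are complementary, so that exactly the two boundary terms and the identity survive. I would handle this by listing the pairs with $k=n+1$ separately: these are $\partial_{n+1,j}s_{n,n+1}$ for $0\le j\le n+1$, giving $t_n$, $s_{n-1,n}\partial_{n,j}$ for $1\le j\le n$, and $1$. Then I would apply the simplicial computation from the proof that $b^2=0$ to the remaining pairs. As a consistency check, the two boundary terms are exchanged by the anti-involution $(-)^\circ$, matching the symmetry between $b$ and $d$.
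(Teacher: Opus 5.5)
Your proposal is correct and follows essentially the paper's route: expand both double sums, rewrite $b_{n+1}d_n$ using the face--degeneracy relations sorted by $k-j$, and observe that the diagonal contributes $(n+2)-(n+1)=1$, the corner $(j,k)=(0,n+1)$ contributes $(-1)^{n+1}\partial_{n+1,0}s_{n,n+1}$, and the off-diagonal terms give $-d_{n-1}b_n+(-1)^n s_{n-1,n}\partial_{n,0}$. One small slip: after reindexing, the first family covers all $0\le j'\le k'\le n$ except $(j',k')=(0,n)$, so it includes the extra-degeneracy terms $s_{n-1,n}\partial_{n,j'}$ for $1\le j'\le n$; it does not stop at $k'\le n-1$ as you wrote, although your final count does use the correct range.
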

\begin{proof}
  We have
  \begin{align*}
    d_{n-1}b_n &= \sum_{i,j=0}^n (-1)^{i+j} s_{n-1,j} \partial_{n,i} \\
    &= (-1)^n s_{n-1,n}\partial_{n,0} \\
    &\quad - \sum_{k=1}^n (-1)^k \sum_{i=0}^{n-k+1} s_{n-1,i+k-1} \partial_{n,i} +
      \sum_{k=1}^n (-1)^k \sum_{i=k+1}^{n+1} s_{n-1,i-k-1} \partial_{n,i-1}
      \intertext{and}
    b_{n+1}d_n
    &= \sum_{i,j=0}^{n+1} (-1)^{i+j} \partial_{n+1,i} s_{n,j} \\
    &= (-1)^{n+1} \partial_{n+1,0} s_{n,n+1} + \sum_{k=1}^n (-1)^k
      \sum_{i=0}^{n-k+1} \partial_{n+1,i} s_{n,i+k} + \sum_{i=0}^{n+1} \partial_{n+1,i}
      s_{n,i} \\
    &\quad - \sum_{i=1}^{n+1} \partial_{n+1,i} s_{n,i-1} - \sum_{k=1}^{n} (-1)^k
      \sum_{i=k+1}^{n+1} \partial_{n+1,i} s_{n,i-k-1} \\
    &= (-1)^{n+1} \partial_{n+1,0} s_{n,n+1} + \sum_{k=1}^n (-1)^k
      \sum_{i=0}^{n-k+1} s_{n-1,i+k-1} \partial_{n,i} + (n+2) \\
    &\quad - (n+1) - \sum_{k=1}^n (-1)^k \sum_{i=k+1}^{n+1}
      s_{n-1,i-k-1} \partial_{n,i-1} \\
    &= 1 + (-1)^{n+1} \partial^{n+1}_0 s_{n,n+1} \\
    &\quad + \sum_{k=1}^n (-1)^k \sum_{i=0}^{n-k+1} s_{n-1,i+k-1} \partial_{n,i} -
      \sum_{k=1}^n (-1)^k \sum_{i=k+1}^{n+1} s_{n-1,i-k-1} \partial_{n,i-1} .
  \end{align*}
  The result follows.
\end{proof}

\begin{corollary}
  $b_n\kappa_n=\kappa_{n-1}b_n$ and $d_n\kappa_n=\kappa_{n+1}d_n$
\end{corollary}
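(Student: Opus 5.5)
The plan is to read the corollary off the preceding proposition, $\kappa_n = 1 - b_{n+1}d_n - d_{n-1}b_n$, using only that $b$ and $d$ are differentials: $b_{n-1}b_n=0$ (computed in Section~\ref{normalization}) and $d_{n+1}d_n=0$ (by the anti-involution $(-)^\circ$).

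For the first identity, I compose the formula for $\kappa_n$ with $b_n$ on the left. This gives
\begin{equation*}
  b_n\kappa_n = b_n - b_nb_{n+1}d_n - b_nd_{n-1}b_n = b_n - b_nd_{n-1}b_n .
\end{equation*}
Next I write $\kappa_{n-1} = 1 - b_nd_{n-1} - d_{n-2}b_{n-1}$ and compose with $b_n$ on the right. This gives
\begin{equation*}
  \kappa_{n-1}b_n = b_n - b_nd_{n-1}b_n - d_{n-2}b_{n-1}b_n = b_n - b_nd_{n-1}b_n .
\end{equation*}
The two expressions agree.

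The second identity is symmetric. On one side,
\begin{equation*}
  d_n\kappa_n = d_n - d_nb_{n+1}d_n - d_nd_{n-1}b_n = d_n - d_nb_{n+1}d_n .
\end{equation*}
On the other side, with $\kappa_{n+1} = 1 - b_{n+2}d_{n+1} - d_nb_{n+1}$,
\begin{equation*}
  \kappa_{n+1}d_n = d_n - b_{n+2}d_{n+1}d_n - d_nb_{n+1}d_n = d_n - d_nb_{n+1}d_n .
\end{equation*}
Again the two sides coincide.

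There is no real obstacle here. The only points needing care are the degree-$0$ edge case, where $b_0=0$ and the term $d_{-1}b_0$ is absent, and citing $d^2=0$, which the paper asserts via $(-)^\circ$ rather than proving directly.
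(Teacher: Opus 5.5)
Your proposal is correct and is exactly the argument the paper intends: the corollary is stated without proof immediately after the identity $\kappa_n = 1 - b_{n+1}d_n - d_{n-1}b_n$, and it follows from that identity together with $b^2=0$ and $d^2=0$, just as you compute. The degree-$0$ convention you flag ($b_0=0$, with no $d_{-1}b_0$ term) is the only bookkeeping needed.
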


\begin{corollary}
  $\kappa_n = (1-b_{n+1}d_n)(1-d_{n-1}b_n) = (1-d_{n-1}b_n)(1-b_{n+1}d_n)$
\end{corollary}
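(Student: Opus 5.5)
Both factorizations follow by expanding and using only the two facts $b^2=0$, $d^2=0$ together with the proposition $\kappa_n = 1 - b_{n+1}d_n - d_{n-1}b_n$. We multiply out the product
\begin{equation*}
  (1-b_{n+1}d_n)(1-d_{n-1}b_n) = 1 - b_{n+1}d_n - d_{n-1}b_n + b_{n+1}d_nd_{n-1}b_n .
\end{equation*}
The last term contains the composite $d_nd_{n-1}:M_{n-1}\to M_{n+1}$. This vanishes because $d$ is a differential, as was noted when the duchain complex of a duplicial module was introduced; indeed, $d$ corresponds to $b$ under the anti-involution $(-)^\circ$. So the product equals $1-b_{n+1}d_n-d_{n-1}b_n$, which is $\kappa_n$ by the preceding proposition.

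The other order is handled the same way:
\begin{equation*}
  (1-d_{n-1}b_n)(1-b_{n+1}d_n) = 1 - d_{n-1}b_n - b_{n+1}d_n + d_{n-1}b_nb_{n+1}d_n .
\end{equation*}
Here the cross term contains $b_nb_{n+1}=0$, which was verified in Section~\ref{normalization}; the computation given there uses only the face relations, so it holds for duplicial modules as well. The product is therefore again $\kappa_n$.

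No step here is a real obstacle. The only points to check are the index bookkeeping, so that the composites $d_nd_{n-1}$ and $b_nb_{n+1}$ are the ones that actually appear, and the fact that both differentials square to zero. The substantive work, the identity $\kappa_n = 1-bd-db$, is already contained in the preceding proposition.
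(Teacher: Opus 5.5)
Your proof is correct and is exactly the argument the paper leaves implicit: expanding each product and killing the cross terms $b_{n+1}d_nd_{n-1}b_n$ and $d_{n-1}b_nb_{n+1}d_n$ using $d^2=0$ and $b^2=0$ reduces both products to $1-b_{n+1}d_n-d_{n-1}b_n=\kappa_n$. Your index bookkeeping is right, and so is your remark that $b^2=0$ uses only the face relations and therefore holds for duplicial modules.
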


The following lemma will allow us to give a formula for the action of
the Karoubi operator in the Dold--Kan decomposition.
\begin{lemma}
  \label{sk}
  \begin{equation*}
    \kappa_{n+1} s_{n,i} =
    \begin{cases}
      0 , & i=0 \\
      - s_{n,i-1} \kappa_n , & 1\le i\le n \\
      \bigl( s_{n,n+1} - s_{n,n} \bigr) \kappa_n , & i=n+1
    \end{cases}
  \end{equation*}
\end{lemma}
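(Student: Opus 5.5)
Work directly from the definition $\kappa_{n+1}=\delta_{n+2}\sigma_{n+1}+\sigma_n\delta_{n+1}$, where $\delta_{n+1}=\partial_{n+1,0}$ and $\sigma_n=(-1)^ns_{n,n+1}$. Compose on the right with $s_{n,i}$ and use the duplicial relations to move $s_{n,i}$ leftward past $\delta$ and $\sigma$. The two relations needed are:
\begin{itemize}
\item[(a)] $s_{n+1,n+2}s_{n,i}=s_{n+1,i}s_{n,n+1}$ for $0\le i\le n+1$, which is the case $j=i$, $k=n+1$ of $s_{n+1,j}s_{n,k}=s_{n+1,k+1}s_{n,j}$;
\item[(b)] the face--degeneracy relations for $\partial_{n+1,0}s_{n,i}$ and $\partial_{n+2,0}s_{n+1,i}$.
\end{itemize}

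Applying (a) gives
\begin{equation*}
\kappa_{n+1}s_{n,i}=(-1)^{n+1}\partial_{n+2,0}s_{n+1,i}s_{n,n+1}+(-1)^n s_{n,n+1}\partial_{n+1,0}s_{n,i}.
\end{equation*}
Then treat the three cases of the lemma.

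For $i=0$, the relation with $k-j=0$ gives $\partial_{n+1,0}s_{n,0}=1$ and $\partial_{n+2,0}s_{n+1,0}=1$. The two terms become $(-1)^{n+1}s_{n,n+1}$ and $(-1)^ns_{n,n+1}$, which cancel, so $\kappa_{n+1}s_{n,0}=0$.

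For $1\le i\le n$, the case $1\le k-j\le n$ gives $\partial_{n+2,0}s_{n+1,i}=s_{n,i-1}\partial_{n+1,0}$ and $\partial_{n+1,0}s_{n,i}=s_{n-1,i-1}\partial_{n,0}$. The first term becomes $(-1)^{n+1}s_{n,i-1}\partial_{n+1,0}s_{n,n+1}=-s_{n,i-1}\delta_{n+1}\sigma_n$. In the second term, use (a) one degree lower, $s_{n,n+1}s_{n-1,i-1}=s_{n,i-1}s_{n-1,n}$, valid since $i-1\le n$. This gives $(-1)^ns_{n,i-1}s_{n-1,n}\partial_{n,0}=-s_{n,i-1}\sigma_{n-1}\delta_n$. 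Adding the two, $\kappa_{n+1}s_{n,i}=-s_{n,i-1}\kappa_n$.

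For $i=n+1$, the first term uses $\partial_{n+2,0}s_{n+1,n+1}$, which lies in the range $1\le k-j\le n+1$ and equals $s_{n,n}\partial_{n+1,0}$. It therefore equals $-s_{n,n}\delta_{n+1}\sigma_n$. The second term involves $\partial_{n+1,0}s_{n,n+1}$, where $k-j=n+1$, so it equals $t_n$. Hence the second term is $(-1)^ns_{n,n+1}t_n$, and this needs rewriting. Write $t_n=(-1)^n\delta_{n+1}\sigma_n$ and compare with the target $(s_{n,n+1}-s_{n,n})\kappa_n$. The target contains $s_{n,n+1}\delta_{n+1}\sigma_n$, which is exactly our second term, and $-s_{n,n}\delta_{n+1}\sigma_n$, which is exactly our first term. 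It therefore remains to show that $(s_{n,n+1}-s_{n,n})\sigma_{n-1}\delta_n=0$. By (a), $s_{n,n+1}s_{n-1,n}=s_{n,n}s_{n-1,n}$, which is the case $j=k=n$ of the degeneracy relation, so this difference vanishes.

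The main obstacle is the boundary case $i=n+1$. It requires careful bookkeeping of signs and of which index range of the face--degeneracy relation applies, and in particular checking that $\partial_{n+2,0}s_{n+1,n+1}$ falls under the $s\partial$ case rather than the $t$ case. For $n=0$ the middle case is vacuous, and the conventions on indices (such as $s_{n-1,n}$ on $M_{n-1}$ when $n=0$) must be checked separately. There the term $\sigma_{-1}\delta_0$ is interpreted as zero, which makes the vanishing automatic.
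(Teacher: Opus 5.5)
Your proposal is correct and follows essentially the same route as the paper's proof. Both arguments use the degeneracy relation $s_{n+1,n+2}s_{n,i}=s_{n+1,i}s_{n,n+1}$ and the face--degeneracy relations, treat the three cases separately, and in the case $i=n+1$ kill the leftover term using $s_{n,n+1}s_{n-1,n}=s_{n,n}s_{n-1,n}$. Writing everything in terms of $\delta$ and $\sigma$ also keeps your signs more transparent than the paper's own bookkeeping.
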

\begin{proof}
  If $i=0$, we have
  \begin{align*}
    (-1)^n \kappa_{n+1} s_{n,0} &= \partial_{n+2,0} s_{n+1,n+2} s_{n,0} - s_{n,n+1}
                             \partial_{n+1,0} s_{n,0} \\
                           &= \partial_{n+2,0} s_{n+1,0} s_{n,n+1} - s_{n,n+1}
                             \partial_{n+1,0} s_{n,0} \\
                           &= s_{n,n+1} - s_{n,n+1} = 0 .
  \end{align*}
  If $1\le i\le n$, we have
  \begin{align*}
    (-1)^n \kappa_{n+1} s_{n,i} &= \partial_{n+2,0} s_{n+1,n+2} s_{n,i} - s_{n,n+1}
                             \partial_{n+1,0} s_{n,i} \\
                           &= \partial_{n+2,0} s_{n+1,i} s_{n,n+1} - s_{n,n+1}
                             s_{n-1,i-1} \partial_{n,0} \\
                           &= s_{n,i-1} \partial_{n+1,0} s_{n,n+1} - s_{n,i-1}
                             s_{n-1,n} \partial_{n,0} \\
                           &= (-1)^{n-1} s_{n,i-1} \kappa_n .
  \end{align*}
  Finally, we have
  \begin{align*}
    (-1)^n \kappa_{n+1} s_{n,n+1} &= \partial_{n+2,0} s_{n+1,n+2} s_{n,n+1} - s_{n,n+1}
                             \partial_{n+1,0} s_{n,n+1} \\
                           &= \partial_{n+2,0} s_{n+1,n+1} s_{n,n+1} - s_{n,n+1}
                             \partial_{n+1,0} s_{n,n+1} \\
                           &= \bigl( s_{n,n} - s_{n,n+1} \bigr)
                             \partial_{n+1,0} s_{n,n+1} \\
                           &= (-1)^{n-1} \bigl( s_{n,n+1} - s_{n,n}
                             \bigr) \kappa_n + (-1)^n \bigl( s_{n,n+1} -
                             s_{n,n} \bigr) s_{n-1,n} \partial_{n,0} \\
                           &= (-1)^{n-1} \bigl( s_{n,n+1} - s_{n,n}
                             \bigr) \kappa_n ,
  \end{align*}
  since $s_{n,n+1} s_{n-1,n} = s_{n,n} s_{n-1,n}$.
\end{proof}

\begin{corollary}
  \begin{multline*}
    \kappa_n \sum_{k=0}^n \sum_{0\le i_k<\cdots<i_1<n} s_{n-1,i_1}\ldots s_{n-k,i_k}x_{i_1\ldots i_k} \\
    = \sum_{k=0}^{n-1} (-1)^k \sum_{0<i_k<\cdots<i_1<n} s_{n-1,i_1-1}\ldots
    s_{n-k,i_k-1} \bigl( \kappa_{n-k}x_{i_1\ldots i_k} \bigr)
  \end{multline*}
\end{corollary}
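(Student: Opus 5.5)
By linearity it suffices to compute $\kappa_n$ on a single term $s_{n-1,i_1}\ldots s_{n-k,i_k}x_{i_1\ldots i_k}$ of the Dold--Kan decomposition of Theorem~\ref{DoldKan}. Here $0\le i_k<\cdots<i_1<n$ and $x_{i_1\ldots i_k}\in N_{n-k}(M)$. The plan is to push $\kappa$ from left to right through the string of degeneracies using Lemma~\ref{sk}, one degeneracy at a time. We induct on $k$; the case $k=0$ is trivial.

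The first degeneracy is $s_{n-1,i_1}$, and Lemma~\ref{sk} applies to it with $n$ replaced by $n-1$. Since $i_1\le n-1$, the exceptional case $i=n$ of the lemma never occurs. Hence $\kappa_n s_{n-1,i_1}$ equals $0$ if $i_1=0$, and equals $-s_{n-1,i_1-1}\kappa_{n-1}$ if $1\le i_1\le n-1$. The same holds at every later stage. At stage $j$ the operator acting is $\kappa_{n-j+1}$, and it meets $s_{n-j,i_j}$ with $i_j\le n-j$, because the indices are strictly decreasing and $i_1<n$. So only the first two cases of Lemma~\ref{sk} are ever used.

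Iterating, the term vanishes as soon as some $i_j=0$. Since the indices decrease, this happens exactly when $i_k=0$. If instead $i_k>0$, then all $i_j>0$, and we pick up one sign $-1$ per degeneracy. The result is
\begin{equation*}
  (-1)^k s_{n-1,i_1-1}\ldots s_{n-k,i_k-1}\kappa_{n-k}x_{i_1\ldots i_k} .
\end{equation*}
The $s$-indices come out right: at stage $j$ the lemma produces $s_{n-j,i_j-1}$, which is exactly the $j$-th factor in the displayed formula.

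Summing over all terms gives the right-hand side of the Corollary. The constraint $0<i_k$ together with $i_1<n$ forces $k\le n-1$, which explains the upper limit of the outer sum. The main point needing care is the bookkeeping: checking that the subscript on $\kappa$ matches the subscript shift in Lemma~\ref{sk} at each stage, and that the condition $i_j\le n-j$ keeps us out of the $i=n+1$ case. Note that the result is not claimed to be a Dold--Kan decomposition, since $\kappa_{n-k}x$ need not be normalized. So no further argument is needed beyond the termwise computation.
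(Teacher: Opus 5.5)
Your proof is correct and is the argument the paper intends. The corollary is recorded immediately after Lemma~\ref{sk} as the result of pushing $\kappa$ through the string of degeneracies, and your check that $i_j\le n-j$ keeps every step out of the extra-degeneracy case $i=n-j+1$ is the only bookkeeping needed.

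One small caveat: your remark that the constraints force $k\le n-1$ applies only when $k\ge1$, since the $k=0$ term $\kappa_n x$ has no index constraints. So for $n=0$ the displayed upper limit should really read $\max(n-1,0)$, because $\kappa_0=t_0$ need not vanish.
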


Thus, the Karoubi operator on a duplicial module $M_\bullet$ is determined
by the Karoubi operator on its normalization $N_\bullet(M)$.

A \textbf{parachain complex} is a duchain complex $(M_\bullet,b,d)$ such
that the Karoubi operator $\kappa_n=1-b_{n+1}d_n-d_{n-1}b_n$ is invertible
for all $n\ge0$, or equivalently, such that the operators
$1-b_{n+1}d_n$ and $1-d_{n-1}b_n$ are invertible for $n\ge0$. In the
next section, we will prove the result of Dwyer and Kan \cite{DK},
that a duplicial module $M_\bullet$ is paracyclic if and only if its
normalization $N_\bullet(M)$ is a parachain complex.

\section{The Dwyer--Kan operator}
\label{section:DwyerKan}

Let $M_\bullet$ be a duplicial module. The Dwyer--Kan operator on $M_n$ is
the operator
\begin{equation*}
  \pi_n = (-1)^n \partial_{n+1,0} \kappa_{n+1}^n s_{n,n+1} =
  \delta_{n+1}(\delta_{n+2}\sigma_{n+1}+\sigma_n\delta_{n+1})^n\sigma_n .
\end{equation*}
Like the Karoubi operator, this operator may be expressed in terms of
$b$ and $d$.
\begin{proposition}
  \label{pi}
  $\pi_n = \kappa_n^n - b_{n+1}\kappa_n^nd_n$
\end{proposition}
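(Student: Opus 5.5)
The approach is to move everything past the powers of the Karoubi operator using Lemma~\ref{sk} and its dual. I will then compare the result with $\kappa_n^n-b_{n+1}\kappa_{n+1}^nd_n$. By the corollary $d_n\kappa_n=\kappa_{n+1}d_n$ we have $b_{n+1}\kappa_{n+1}^nd_n=b_{n+1}d_n\kappa_n^n$. So the statement is equivalent to
\begin{equation*}
  \delta_{n+1}\kappa_{n+1}^n\sigma_n=(1-b_{n+1}d_n)\kappa_n^n .
\end{equation*}
(I read the $\kappa_n^n$ in the statement as $\kappa_{n+1}^n$ when it is placed between $b_{n+1}$ and $d_n$; this is the only reading that typechecks.) As a sanity check at $n=0$: $\pi_0=\partial_{1,0}s_{0,1}=t_0$, and $1-b_1d_0=1-(\partial_{1,0}-\partial_{1,1})(s_{0,0}-s_{0,1})=t_0$.

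First step: iterate Lemma~\ref{sk}. For $1\le i\le n$, repeated use of the middle case gives $\kappa_{n+1}^js_{n,i}=(-1)^js_{n,i-j}\kappa_n^j$ for $j\le i$. The case $i=0$ of the lemma then kills the next power. Hence $\kappa_{n+1}^ns_{n,i}=0$ for $0\le i<n$, while $\kappa_{n+1}^ns_{n,n}=(-1)^ns_{n,0}\kappa_n^n$. Now solve the formula $d_n=\sum_{i=0}^{n+1}(-1)^is_{n,i}$ for the extra degeneracy:
\begin{equation*}
  \sigma_n=(-1)^ns_{n,n+1}=-d_n+\sum_{i=0}^{n}(-1)^is_{n,i}.
\end{equation*}
Applying $\kappa_{n+1}^n$ and using $\kappa_{n+1}^nd_n=d_n\kappa_n^n$, only the $i=n$ term of the sum survives. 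This gives
\begin{equation*}
  \kappa_{n+1}^n\sigma_n=s_{n,0}\kappa_n^n-d_n\kappa_n^n .
\end{equation*}
Applying $\delta_{n+1}=\partial_{n+1,0}$ and using $\partial_{n+1,0}s_{n,0}=1$ yields
\begin{equation*}
  \pi_n=\kappa_n^n-\partial_{n+1,0}\,d_n\kappa_n^n .
\end{equation*}

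It remains to replace $\partial_{n+1,0}$ by $b_{n+1}$ in the last term. For this I will prove the dual of Lemma~\ref{sk}, by applying the anti-involution $f\mapsto f^\circ$, which exchanges $\partial_{n+1,i}$ with $s_{n,n+1-i}$ and preserves $\kappa$ up to the evident sign. The dual lemma is a formula for $\partial_{n+1,i}\kappa_{n+1}$:
\begin{enumerate}[i)]
\item it vanishes for $i=n+1$;
\item it equals $-\kappa_n\partial_{n+1,i+1}$ for $1\le i\le n$;
\item it has an extra two-term correction for $i=0$.
\end{enumerate}
Iterating this as above shows that $\partial_{n+1,i}\kappa_{n+1}^n=0$ for $2\le i\le n+1$. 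The term $\partial_{n+1,1}\kappa_{n+1}^n$, by contrast, reduces to $\pm\kappa_n^{n-1}\partial_{n+1,n}\kappa_{n+1}$.

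Writing $d_n\kappa_n^n=\kappa_{n+1}^nd_n$, the difference $(b_{n+1}-\partial_{n+1,0})\kappa_{n+1}^nd_n$ then collapses to a single term. That term is a multiple of $\kappa_n^{n-1}\partial_{n+1,n}\kappa_{n+1}d_n$. The main obstacle is showing that this residual term vanishes, with all the signs kept straight. My first attempt will be to rewrite $\kappa_{n+1}d_n=d_n\kappa_n$, expand $\partial_{n+1,n}d_n$ using the face--degeneracy relations, and check that the result is annihilated by $\kappa_n^{n-1}$ using Lemma~\ref{sk} again. If this bookkeeping becomes unmanageable, I will fall back on induction on $n$, using $\kappa_n=(1-b_{n+1}d_n)(1-d_{n-1}b_n)$ from the corollary to peel off one factor of $\kappa$ at a time.
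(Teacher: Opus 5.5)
Your route is the paper's: iterate Lemma~\ref{sk} to reach $\pi_n=\kappa_n^n-\partial_{n+1,0}d_n\kappa_n^n$, then iterate Lemma~\ref{pk} to trade $\partial_{n+1,0}$ for $b_{n+1}$. The first half is correct and complete. The gap is the final step. You leave it open, and the plan you sketch for it fails as stated. After commuting $\kappa_{n+1}d_n=d_n\kappa_n$, the face--degeneracy relations give
\begin{equation*}
  \partial_{n+1,n}d_n=\sum_{i=0}^{n-2}(-1)^i s_{n-1,i}\partial_{n,n-1}+(-1)^{n+1}s_{n-1,n}\partial_{n,n}
\end{equation*}
(the contributions of $s_{n,n-1}$ and $s_{n,n}$ cancel). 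Left multiplication by $\kappa_n^{n-1}$ kills the sum by Lemma~\ref{sk}. It does not kill the last term, which comes from the extra degeneracy in $d_n$: Lemma~\ref{sk} gives $\kappa_n^{n-1}s_{n-1,n}=(-1)^n(d_{n-1}-s_{n-1,0})\kappa_{n-1}^{n-1}$, which is not zero in general. Already for $n=1$ your claim would say $\partial_{2,1}d_1=0$. In fact $\partial_{2,1}d_1=s_{0,1}\partial_{1,1}$, which is nonzero as soon as $M_0\neq0$. That term dies only because of the trailing factor $\kappa_n$, via $\partial_{n,n}\kappa_n=0$ (the vanishing case of your own dual lemma), not via Lemma~\ref{sk}.

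The cleanest repair, and what the paper does, is to apply Lemma~\ref{pk} once more: $\partial_{n+1,n}\kappa_{n+1}=-\kappa_n\partial_{n+1,n+1}$. Then $\partial_{n+1,0}\kappa_{n+1}^n=\kappa_n^n\bigl(b_{n+1}-(-1)^{n+1}\partial_{n+1,n+1}\bigr)$, and the residual term is $(-1)^{n+1}\kappa_n^n\partial_{n+1,n+1}d_n$. In $\partial_{n+1,n+1}d_n$, the terms from $s_{n,n}$ and $s_{n,n+1}$ both give $1$ with opposite signs and cancel. What remains is $\sum_{i=0}^{n-1}(-1)^is_{n-1,i}\partial_{n,n}$. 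Every $s_{n-1,i}$ with $i\le n-1$ is killed by $\kappa_n^n$, since Lemma~\ref{sk} gives $\kappa_n^{i+1}s_{n-1,i}=0$. So the residual vanishes, and the induction fallback is unnecessary.
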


The proof uses Lemma \ref{sk}, together with the following lemma,
which follows from Lemma \ref{sk} on applying the anti-involution
$(-){}^\circ$.
\begin{lemma}
  \label{pk}
  \begin{equation*}
    \partial_{n,i} \kappa_n =
    \begin{cases}
      \kappa_{n-1} \bigl( \partial_{n,0} - \partial_{n,1} \bigr) , & i=0 , \\
      - \kappa_{n-1} \partial_{n,i+1} , & 1\le i<n , \\
      0 , & i=n .
    \end{cases}
  \end{equation*}
\end{lemma}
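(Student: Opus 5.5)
The plan is to deduce the formula from Lemma~\ref{sk} by applying the anti-involution $f\mapsto f^\circ$, as the text indicates. First I will make this legitimate. The proof of Lemma~\ref{sk} used only the defining relations among $\partial_{n,i}$ and $s_{n,i}$, so its identities already hold in the free pre-additive category $\Z\Lambda_+$, where the action of $\varepsilon^n_i$ and $\eta^n_i$ is interpreted as $\partial_{n,i}$ and $s_{n,i}$. The contravariant isomorphism $(-)^\circ$ sends the generators $\varepsilon^n_0$, $\eta^{n-1}_n$ of $\Lambda_+$ to each other and fixes $\tau_n$, so it restricts to an anti-automorphism of $\Lambda_+$. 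It then extends additively to an anti-automorphism of $\Z\Lambda_+$. Hence applying $(-)^\circ$ to an operator identity valid in every duplicial module again gives an identity valid in every duplicial module. On operators it acts by reversing the order of composition and exchanging $\partial_{n,i}$ with $s_{n-1,n-i}$.

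Next I will check that $\kappa_n^\circ=\kappa_n$. Since $\partial_{n+1,0}^\circ=s_{n,n+1}$ and $s_{n,n+1}^\circ=\partial_{n+1,0}$, reversal of order gives
\begin{equation*}
  (\partial_{n+1,0}s_{n,n+1})^\circ=\partial_{n+1,0}s_{n,n+1} .
\end{equation*}
Likewise $s_{n-1,n}^\circ=\partial_{n,0}$ and $\partial_{n,0}^\circ=s_{n-1,n}$, so $(s_{n-1,n}\partial_{n,0})^\circ=s_{n-1,n}\partial_{n,0}$. The sign $(-1)^n$ is unaffected, so $\kappa_n^\circ=\kappa_n$.

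Now apply $(-)^\circ$ to Lemma~\ref{sk} with $n$ replaced by $n-1$, that is, to the formula for $\kappa_n s_{n-1,i}$, $0\le i\le n$. Using $s_{n-1,i}^\circ=\partial_{n,n-i}$, the left side becomes $\partial_{n,n-i}\kappa_n$. The three cases then translate as follows.
\begin{enumerate}[i)]
\item For $i=0$ we get $\partial_{n,n}\kappa_n=0$.
\item For $1\le i\le n-1$ we get $\partial_{n,n-i}\kappa_n=-\kappa_{n-1}s_{n-2,i-1}^\circ=-\kappa_{n-1}\partial_{n-1,n-i}$. Writing $j=n-i$, this is the middle case for $1\le j<n$, once the index of the face map is matched with the paper's convention.
\item For $i=n$ we get $\partial_{n,0}\kappa_n=\kappa_{n-1}(s_{n-1,n}-s_{n-1,n-1})^\circ=\kappa_{n-1}(\partial_{n,0}-\partial_{n,1})$.
\end{enumerate}

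The main obstacle is purely bookkeeping: getting the index shift $s_{n-1,i}\leftrightarrow\partial_{n,n-i}$ right in each degree, and making sure the middle case produces exactly $\partial_{n,i+1}$. If the shift comes out off by one, I will instead verify the middle case directly. This amounts to expanding $(-1)^n\partial_{n,i}\kappa_n=\partial_{n,i}\partial_{n+1,0}s_{n,n+1}-\partial_{n,i}s_{n-1,n}\partial_{n,0}$ with the face and degeneracy relations, exactly mirroring the computation in the proof of Lemma~\ref{sk}.
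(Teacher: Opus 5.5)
Your strategy is the paper's: its entire proof of this lemma is to apply $(-)^\circ$ to Lemma~\ref{sk}. Your check that $\kappa_n^\circ=\kappa_n$ is correct, and so are your translations of the cases $i=0$ and $i=n$ of Lemma~\ref{sk} at level $n-1$, which give $\partial_{n,n}\kappa_n=0$ and $\partial_{n,0}\kappa_n=\kappa_{n-1}(\partial_{n,0}-\partial_{n,1})$.

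The middle case, however, is wrong as written. The appeal to ``the paper's convention'' covers a slip, not a difference of conventions. Lemma~\ref{sk} with $n$ replaced by $n-1$ reads $\kappa_n s_{n-1,i}=-s_{n-1,i-1}\kappa_{n-1}$ for $1\le i\le n-1$, and both sides are maps $M_{n-1}\to M_n$. You wrote $s_{n-2,i-1}$ instead. That is why you arrived at $\kappa_{n-1}\partial_{n-1,n-i}$, which is not even composable, since $\partial_{n-1,n-i}$ lands in $M_{n-2}$. Since $s_{n-1,i-1}^\circ=\partial_{n,n-i+1}$, the correct dual is
\begin{equation*}
  \partial_{n,n-i}\kappa_n=-\kappa_{n-1}\partial_{n,n-i+1} .
\end{equation*}
For $j=n-i\in\{1,\dots,n-1\}$ this is precisely $\partial_{n,j}\kappa_n=-\kappa_{n-1}\partial_{n,j+1}$. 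So no off-by-one remains and the fallback is unnecessary.

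The fallback would also work. For $1\le j<n$ the relations give $\partial_{n,j}\partial_{n+1,0}s_{n,n+1}=\partial_{n,0}s_{n-1,n}\partial_{n,j+1}$ and $\partial_{n,j}s_{n-1,n}\partial_{n,0}=s_{n-2,n-1}\partial_{n-1,0}\partial_{n,j+1}$. Their difference, which is $(-1)^n\partial_{n,j}\kappa_n$, therefore equals $(-1)^{n-1}\kappa_{n-1}\partial_{n,j+1}$.

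A smaller point concerns your argument that $(-)^\circ$ preserves $\Lambda_+$. It leans on $\varepsilon^n_0$ and $\eta^n_{n+1}$ generating $\Lambda_+$, which is not literally true. Every composite of these that contains some $\varepsilon^n_0$ sends $0$ to an integer $\ge 1$, so $\varepsilon^1_1$ is not such a composite.

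It is cleaner to check that the list of duplicial relations is carried to itself when you reverse composition and exchange $\partial_{n,i}$ with $s_{n-1,n-i}$:
\begin{enumerate}[(a)]
\item the face--face and degeneracy--degeneracy relations are exchanged;
\item the mixed relations are preserved, because the difference $k-j$ is unchanged and $t_n^\circ=t_n$.
\end{enumerate}
That is all the dualization of Lemma~\ref{sk} requires.
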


\begin{proof}[Proof of Proposition \ref{pi}]
  It follows from Lemma~\ref{sk} that
  \begin{equation*}
    \kappa_n^ms_{n,n+1} = \sum_{i=0}^m (-1)^m s_{n,n-m+1} ,
  \end{equation*}
  and hence that
  \begin{equation*}
    \partial_{n+1} \kappa_n^ns_{n,n+1} = (-1)^{n+1} \partial_{n+1,0} ( d_n - s_{n,0} ) \kappa_n^n
    = (-1)^{n+1} \partial_{n+1,0} \kappa_n^n d_n - (-1)^{n+1} \kappa_n^n .
  \end{equation*}
  Likewise, it follows from Lemma~\ref{pk} that
  \begin{equation*}
    \partial_{n+1,0} \kappa_n^n d_n = \kappa_n^n \bigl( b_{n+1} - (-1)^{n+1} \partial_{n+1,n+1}
    \bigr) d_n = b_{n+1} \kappa_n^n d_n - (-1)^{n+1} \kappa_n^n \partial_{n+1,n+1} d_n .
  \end{equation*}
  But $\kappa_n^n \partial_{n+1,n+1} d_n$ vanishes:
  \begin{equation*}
    \kappa_n^n \partial_{n+1,n+1} d_n = \sum_{i=0}^{n-1} (-1)^i \kappa_n^n \partial_{n+1,n+1}
    s_{n,i} = \sum_{i=0}^{n-1} (-1)^i \kappa_n^n s_{n-1,i} \partial_{n,n} ,
  \end{equation*}
  and $\kappa_n^n s_{n-1,i}=0$ by Lemma~\ref{sk}.
\end{proof}

\begin{corollary}
  $\pi_n = (1-b_{n+1}d_n)^{n+1} (1-d_{n-1}b_n)^n$
\end{corollary}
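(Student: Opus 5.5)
We derive the Corollary from Proposition~\ref{pi}, $\pi_n=\kappa_n^n-b_{n+1}\kappa_n^nd_n$, together with the factorization $\kappa_n=(1-b_{n+1}d_n)(1-d_{n-1}b_n)=(1-d_{n-1}b_n)(1-b_{n+1}d_n)$ of the previous corollary. Write $P=b_{n+1}d_n$ and $Q=d_{n-1}b_n$, both endomorphisms of $M_n$. Since $b^2=0$ and $d^2=0$, we have $PQ=b_{n+1}d_nd_{n-1}b_n=0$ and $QP=d_{n-1}b_nb_{n+1}d_n=0$. Thus $P$ and $Q$ commute, and in fact annihilate each other.

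First, we compute $\kappa_n^n=(1-P)^n(1-Q)^n$. This is immediate because $1-P$ and $1-Q$ commute.

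Next, we rewrite $b_{n+1}\kappa_n^nd_n$. Using the intertwining relation $\kappa_{n}d_{n-1}$-type identity $d_n\kappa_n=\kappa_{n+1}d_n$ (from the first corollary), or more simply working directly, we have
\begin{equation*}
b_{n+1}\kappa_{n+1}^n d_n = b_{n+1}d_n\kappa_n^n = P\kappa_n^n .
\end{equation*}
The exponent bookkeeping needs care here. In Proposition~\ref{pi} the middle operator acts on $M_{n+1}$, so we read $b_{n+1}\kappa_n^nd_n$ as $b_{n+1}\kappa_{n+1}^nd_n$. The corollary $d_n\kappa_n=\kappa_{n+1}d_n$ then moves the power of $\kappa$ past $d_n$, giving $P\kappa_n^n$.

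It follows that
\begin{equation*}
\pi_n=(1-P)\kappa_n^n=(1-P)^{n+1}(1-Q)^n,
\end{equation*}
which is the claimed formula. No use of $PQ=0$ beyond commutation is needed.

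The only real obstacle is confirming the degree bookkeeping in Proposition~\ref{pi}, namely that the $\kappa$ appearing between $b_{n+1}$ and $d_n$ lives on $M_{n+1}$, so that the intertwining corollary applies. Once that is settled, the argument is two lines.
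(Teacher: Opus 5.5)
Your proof is correct and follows the paper's (unwritten) derivation: Proposition~\ref{pi} together with $d_n\kappa_n=\kappa_{n+1}d_n$ gives $\pi_n=(1-b_{n+1}d_n)\kappa_n^n$, and the commuting factorization $\kappa_n=(1-b_{n+1}d_n)(1-d_{n-1}b_n)$ then expands $\kappa_n^n$. Your reading of the middle factor as $\kappa_{n+1}^n$ is also how the paper itself uses Proposition~\ref{pi}, in its computation of $b_{n+1}B_n+B_{n-1}b_n$.
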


\begin{corollary}
  $b_n\pi_n=\pi_{n-1}b_n$ and $d_n\pi_n=\pi_{n+1}d_n$
\end{corollary}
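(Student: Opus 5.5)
We prove both identities directly from Proposition~\ref{pi}, using the commutation relations already established. By the corollary preceding this one, $\kappa_n$ commutes with the differentials: $b_n\kappa_n=\kappa_{n-1}b_n$ and $d_n\kappa_n=\kappa_{n+1}d_n$. Iterating gives $b_n\kappa_n^m=\kappa_{n-1}^mb_n$ and $d_n\kappa_n^m=\kappa_{n+1}^md_n$ for every $m\ge0$. The only real issue is that the exponent in $\pi_n=\kappa_n^n-b_{n+1}\kappa_n^nd_n$ depends on $n$. So after commuting $b$ or $d$ past, we will need to replace a power $\kappa^{n}$ by $\kappa^{n-1}$ or $\kappa^{n+1}$, and this has to be paid for by the extra terms.

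For the first identity, compute
\begin{equation*}
  b_n\pi_n = b_n\kappa_n^n - b_nb_{n+1}\kappa_n^nd_n = \kappa_{n-1}^n b_n ,
\end{equation*}
using $b_nb_{n+1}=0$. On the other side,
\begin{equation*}
  \pi_{n-1}b_n = \kappa_{n-1}^{n-1}b_n - b_n\kappa_{n-1}^{n-1}d_{n-1}b_n .
\end{equation*}
By Proposition~3.1, $d_{n-1}b_n=1-\kappa_n-b_{n+1}d_n$. Substituting and commuting,
\begin{equation*}
  b_n\kappa_{n-1}^{n-1}d_{n-1}b_n = \kappa_{n-2}^{n-1}b_nd_{n-1}b_n = \kappa_{n-2}^{n-1}b_n(1-\kappa_n-b_{n+1}d_n) .
\end{equation*}
Since $b_nb_{n+1}=0$ and $b_n\kappa_n=\kappa_{n-1}b_n$, this equals
\begin{equation*}
  \kappa_{n-2}^{n-1}(1-\kappa_{n-1})b_n = (\kappa^{n-1}-\kappa^{n})b_n .
\end{equation*}
The last form is more cleanly seen by writing everything to the left of $b_n$. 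Hence $\pi_{n-1}b_n=\kappa^{n-1}b_n-\kappa^{n-1}b_n+\kappa^nb_n=\kappa_{n-1}^nb_n$, which agrees with $b_n\pi_n$.

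The second identity is symmetric. First,
\begin{equation*}
  \pi_{n+1}d_n=\kappa_{n+1}^{n+1}d_n-b_{n+2}\kappa^{n+1}_{n+1}d_{n+1}d_n=\kappa_{n+1}^{n+1}d_n .
\end{equation*}
Second,
\begin{equation*}
  d_n\pi_n = d_n\kappa_n^n - d_nb_{n+1}\kappa_n^nd_n = \kappa_{n+1}^nd_n - \kappa_{n+1}^n d_nb_{n+1}d_n .
\end{equation*}
Using $d_nb_{n+1}=1-\kappa_{n+1}-b_{n+2}d_{n+1}$ together with $d_{n+1}d_n=0$, we get $d_nb_{n+1}d_n=(1-\kappa_{n+1})d_n$. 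Therefore $d_n\pi_n=\kappa_{n+1}^nd_n-\kappa_{n+1}^n(1-\kappa_{n+1})d_n=\kappa_{n+1}^{n+1}d_n$, as required.

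The only delicate point is the index bookkeeping in the first computation. The cleanest route is to move every factor $\kappa$ to the left of $b_n$ before simplifying, so that all powers act on $M_{n-1}$.
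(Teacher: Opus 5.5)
Your proof is correct, but it takes a slightly different route from the paper's. The paper uses the factored form $\pi_n=(1-b_{n+1}d_n)^{n+1}(1-d_{n-1}b_n)^n$. The left factor of $\pi_n$ is absorbed by $b_n$ on the left, since $b_nb_{n+1}=0$. The right factor of $\pi_{n-1}$ is absorbed by $b_n$ on the right, since $b_{n-1}b_n=0$. What remains is $b_n(1-d_{n-1}b_n)^n$ and $(1-b_nd_{n-1})^nb_n$, and these agree by the trivial intertwining $b_n(1-d_{n-1}b_n)=(1-b_nd_{n-1})b_n$. The identity for $d$ is then deduced by the anti-involution $(-)^\circ$, using $\pi_n^\circ=\pi_n$. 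In that form the asymmetric exponents $n+1$ and $n$ absorb the degree shift automatically.

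You instead work with the additive form $\pi_n=\kappa_n^n-b_{n+1}\kappa^nd_n$ of Proposition~\ref{pi} and the commutation of $\kappa$ with $b$ and $d$. There the mismatch between $\kappa^{n-1}$ and $\kappa^n$ (resp.\ $\kappa^n$ and $\kappa^{n+1}$) has to be paid for explicitly. You do this correctly via $b_nd_{n-1}b_n=(1-\kappa_{n-1})b_n$ and $d_nb_{n+1}d_n=(1-\kappa_{n+1})d_n$, both consequences of $\kappa=1-bd-db$ and $b^2=d^2=0$. Your route is a little longer, but it produces the common values explicitly, $b_n\pi_n=\pi_{n-1}b_n=\kappa_{n-1}^nb_n$ and $d_n\pi_n=\pi_{n+1}d_n=\kappa_{n+1}^{n+1}d_n$. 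It also treats $d$ directly rather than by duality.

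One notational slip: after you commute $\kappa^{n-1}$ to the left of $b_n$ it acts on $M_{n-1}$, so $\kappa_{n-2}^{n-1}$ should read $\kappa_{n-1}^{n-1}$. As written, $\kappa_{n-2}^{n-1}(1-\kappa_{n-1})b_n$ is not composable. The paper's own $b_{n+1}\kappa_n^nd_n$ has the same index abuse, and your final expression $(\kappa_{n-1}^{n-1}-\kappa_{n-1}^n)b_n$ is right.
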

\begin{proof}
  We give the proof that $b\pi=\pi b$; the proof that $d\pi=\pi d$ is obtained
  from it by applying the anti-involution $(-){}^\circ$, using that
  $\pi_n^\circ=\pi_n$.

  We have
  \begin{equation*}
    b_n\pi_n = b_n(1-b_{n+1}d_n)^{n+1}(1-d_{n-1}b_n)^n =
    b_n(1-d_{n-1}b_n)^n
  \end{equation*}
  and
  \begin{equation*}
    \pi_{n-1}b_n = (1-b_nd_{n-1})^n(1-d_{n-2}b_{n-1})^{n-1}b_n =
    (1-b_nd_{n-1})^nb_n .
  \end{equation*}
  We see by induction on $i$ that
  $b_n(1-d_{n-1}b_n)^k=(1-b_nd_{n-1})^kb_n$.
\end{proof}

The operator $\pi$ is chain homotopic to the identity for either of the
differentials $b$ or $d$. For the differential $b$, the contracting
homotopy is the Connes operator
\begin{equation*}
  B_n = \sum_{i=0}^n d_n\kappa_n^i : M_n \to M_{n+1} .
\end{equation*}
We have
\begin{align*}
  b_{n+1}B_n + B_{n-1}b_n &= b_{n+1}d_n(1+\cdots+\kappa_n^n) +
  (1+\cdots+\kappa_n^{n-1})d_{n-1}b_n \\
  &= (b_{n+1}d_n+d_{n-1}b_n)(1+\cdots+\kappa_n^{n-1}) + b_{n+1}d_n\kappa_n^n \\
  &= (1-\kappa_n)(1+\cdots+\kappa_n^{n-1}) + b_{n+1}\kappa_n^nd_n = 1 - \pi_n \intertext{and}
  B_{n+1}B_n &= d_{n+1}(1+\cdots+\kappa_{n+1}^{n+1}) d_n(1+\cdots+\kappa_n^n) \\
  &= d_{n+1}d_n(1+\cdots+\kappa_{n+1}^{n+1})(1+\cdots+\kappa_n^n) = 0 .
\end{align*}
Similarly, for the differential $d$, the contracting homotopy is the
operator
\begin{equation*}
  D_n = \sum_{i=0}^{n-1} b_n\kappa_n^i : M_n \to M_{n-1} .
\end{equation*}
This is the operator $\mathbf{i}$ of Ginzburg and Schedler
\cites{GS1,GS2}, who consider the complex $(M_\bullet\[v\],d+vD)$, where
$v$ is a formal variable of degree $2$, as an alternative to the
complex $(M_\bullet\[u\],b+uB)$, where $u$ is a formal variable of degree
$-2$, defining negative cyclic homology.

We now come to the main result of this article.
\begin{theorem}
  $\pi_n= p_n \circ \T_n = \T_n \circ p_n$
\end{theorem}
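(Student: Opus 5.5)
The proof will show two things: that $\pi_n$ vanishes on the degenerate chains $D_n(M)$, and that $\pi_n$ maps $M_n$ into $N_n(M)$. Together these give $\pi_n=p_n\pi_np_n$. It then remains to check that $p_n\pi_n=p_n\T_n$ on $N_n(M)$. Since $\T$ commutes with every $\partial_{n,i}$ and $s_{n,i}$, it commutes with $p_n$, and we obtain
\[
\pi_n=p_n\pi_np_n=p_n\T_np_n=\T_np_n=p_n\T_n .
\]

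\emph{Vanishing on $D_n(M)$.} Write $\pi_n=\delta_{n+1}\kappa_{n+1}^n\sigma_n$. For $0\le i\le n-1$ the relation $s_{n,n+1}s_{n-1,i}=s_{n,i}s_{n-1,n}$ gives $\sigma_ns_{n-1,i}=\pm s_{n,i}s_{n-1,n}$. By Lemma~\ref{sk}, each factor of $\kappa_{n+1}$ lowers the index of the degeneracy by one, and it kills it once the index reaches $0$. Hence $\kappa_{n+1}^ns_{n,i}=0$ as soon as $i+1\le n$, which always holds. So $\pi_ns_{n-1,i}=0$, and therefore $\pi_n(1-p_n)=0$.

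\emph{Image in $N_n(M)$.} Take $1\le i\le n$. Since $i\ge1$, the face relation gives $\partial_{n,i}\partial_{n+1,0}=\partial_{n,0}\partial_{n+1,i+1}$. Now push $\partial_{n+1,i+1}$ through $\kappa_{n+1}^n$ using Lemma~\ref{pk} (applied with $n+1$ in place of $n$). Each step raises the face index by one, up to sign, and the process terminates at $\partial_{n+1,n+1}\kappa_{n+1}=0$. This happens after $n-i$ steps, which is fewer than the $n$ available, so $\partial_{n,i}\pi_n=0$. Hence $p_n\pi_n=\pi_n$.

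\emph{Comparison with $\T_n$.} This is the main obstacle. The plan is to prove by induction on $m$ a formula for $\delta_{n+1}\kappa_{n+1}^m\sigma_n$ modulo operators with image in $D_n(M)$, since such operators are killed by $p_n$. The input is the expression
\[
\kappa_{n+1}=(-1)^{n+1}\bigl(t_{n+1}-s_{n,n+1}\partial_{n+1,0}\bigr),
\]
which follows from $\partial_{n+2,0}s_{n+1,n+2}=t_{n+1}$. One then moves the extra degeneracy $s_{n,n+1}$ to the right using the relations $t_{n+1}s_{n,i}=s_{n,i-1}t_n$ and $t_{n+1}s_{n,0}=s_{n,n+1}$. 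The expected outcome is $(-1)^n\delta_{n+1}t_{n+1}^m s_{n,n+1}$ plus terms that are either degenerate or vanish after restricting to $N_n(M)$. At $m=n$ this gives $t_n^{n+1}=\T_n$, because $\partial_{n+1,0}t_{n+1}^n s_{n,n+1}=t_n^{n+1}$ follows from the relations $\partial_{n,i}t_n=t_{n-1}\partial_{n,i+1}$ and $t_{n+1}s_{n,i}=s_{n,i-1}t_n$. If the bookkeeping of the correction terms becomes unwieldy, the fallback is the Dold--Kan formula for $t_n$ on a normalized chain $x$ (the case $k=0$ of the displayed formula). That formula shows $t_nx\equiv(-1)^n\kappa_nx$ modulo $D_n(M)$ together with terms involving $d$, and the induction would be run on $p_n\kappa_n^m$ restricted to $N_n(M)$ instead.
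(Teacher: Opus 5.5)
Your first two steps are correct. Lemma~\ref{sk} gives $\kappa_{n+1}^{i+1}s_{n,i}=0$, so $\pi_n$ kills $D_n(M)$. Lemma~\ref{pk} gives $\partial_{n,i}\pi_n=0$ for $1\le i\le n$, so $p_n\pi_n=\pi_n$. The gap is the third step, which carries the whole content of the theorem. It is only a plan, and the identity it rests on is false. Since $\partial_{n+1,0}t_{n+1}^a=t_n^a\partial_{n+1,a}$ and $\partial_{n+1,n}s_{n,n+1}=s_{n-1,n}\partial_{n,n}$, one has $\partial_{n+1,0}t_{n+1}^ns_{n,n+1}=t_n^ns_{n-1,n}\partial_{n,n}=s_{n-1,0}t_{n-1}^n\partial_{n,n}$. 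This lies in $D_n(M)$ and is killed by $p_n$. Your identity holds only with exponent $n+1$, but $\kappa_{n+1}^n$ supplies only $n$ factors. (Even granting it, your prefactor $(-1)^n$ would give the wrong sign for odd $n$.) Already for $n=1$ one finds $\pi_1=t_1^2-s_{0,0}t_0\partial_{1,1}$, and your ``leading term'' is the degenerate second summand. The term $\T_n$ actually comes from a word your plan treats as negligible. Since $\kappa_{n+1}^n=(t_{n+1}-s_{n,n+1}\partial_{n+1,0})^n$, choosing $-s_{n,n+1}\partial_{n+1,0}$ in every factor gives $(-1)^n\partial_{n+1,0}(-s_{n,n+1}\partial_{n+1,0})^ns_{n,n+1}=(\partial_{n+1,0}s_{n,n+1})^{n+1}=t_n^{n+1}$. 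The fallback is not carried out either. A congruence for $t_n$ on $N_n(M)$ modulo $D_n(M)$ cannot simply be iterated $n+1$ times, because $t_n$ does not preserve $D_n(M)$ in general (note $t_ns_{n-1,0}=s_{n-1,n}$).

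The missing idea is to show that every \emph{other} word has image in $D_n(M)$. Cut each word of $(-1)^n\partial_{n+1,0}(t_{n+1}-s_{n,n+1}\partial_{n+1,0})^ns_{n,n+1}$ into blocks $\partial_{n+1,0}t_{n+1}^as_{n,n+1}$. Such a block equals $t_n$ if $a=0$ and $t_n^as_{n-1,n}\partial_{n,a}$ if $1\le a\le n$. Suppose the word contains $m\ge1$ letters $t_{n+1}$, and its first nonzero exponent $a$ is preceded by $c$ blocks with exponent $0$. Then $c\le n-m$ and $a\le m$, so the word begins with $t_n^{c+a}s_{n-1,n}=s_{n-1,n-c-a}t_{n-1}^{c+a}$, where $0\le n-c-a\le n-1$. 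Thus $p_n\pi_n=p_n\T_n$ on all of $M_n$, and your second step gives $\pi_n=p_n\T_n=\T_np_n$; your first step is then unnecessary. This repaired argument differs from the paper's. The paper computes every word exactly as $\Pi_{n-j_1,n-i_1}\cdots\Pi_{n-j_k,n-i_k}\T_n$ and uses the commutation relation \eqref{PiPi} to recognize their sum as the expansion of $p_n$. It therefore never needs to know separately that $\pi_n$ takes values in $N_n(M)$. Your route trades that exact bookkeeping for the vanishing $\partial_{n,i}\pi_n=0$ supplied by Lemma~\ref{pk}.
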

\begin{proof}
  \begin{align*}
    \pi_n &= (-1)^n \partial_{n+1}(\partial_{n+2}s_{n+1}-s_n\partial_{n+1})^ns_n (-1)^{|j|-|i|} \\
        &= t_n^{n+1} + \sum_{k=1}^{n-1} \sum_{0\le i_1<j_1<i_2<j_2< \ldots<i_k<j_k\le n} \\
    & \qquad \bigl( t_n^{i_1} \partial_{n+1} t_{n+1}^{j_1-i_1} s_n \bigr)
           \bigl( t_n^{i_2-j_1-1} \partial_{n+1} t_{n+1}^{j_2-i_2} s_n \bigr) \ldots
           \bigl( t^{i_k-j_{k-1}-1} \partial_{n+1} t_{n+1}^{j_k-i_k} s_n \bigr)
           t_n^{n-j_k} .
  \end{align*}
  If $p<q$, define
  \begin{align*}
    \Pi_{p,q} &= (-1)^{q-p} s_{n-1,p} \partial_{n,q} \\
           &= \bigl( - s_{n-1,p} \partial_{n,p+1} \bigr)
             \bigl( - s_{n-1,p+1} \partial_{n,p+2} \bigr) \ldots
             \bigl( - s_{n-1,q-1} \partial_{n,q} \bigr) .
  \end{align*}
  By definition,
  \begin{equation*}
    \sum_{k=0}^{n-1} \sum_{0\le i_1<j_1<i_2<j_2< \ldots<i_k<j_k\le n} \Pi_{i_1,j_1} \ldots
    \Pi_{i_k,j_k} .
  \end{equation*}
  We have
  \begin{align*}
    (-1)^{j-i} t_n^i \partial_{n+1} t_{n+1}^{j-i} s_n
    &= (-1)^{j-i} t_n^{i-n-1} \partial_{n+1} t_{n+1}^{j-i+n+2} s_n \\
    &= (-1)^{j-i} \partial_{n+1,n-i+1} t_{n+1}^{j+1} s_n
    = (-1)^{j-i} \partial_{n+1,n-i+1} s_{n,n-j} t_n^{j+1} \\
    &= (-1)^{j-i} s_{n-1,n-j} \partial_{n,n-i} t_{n+1}^{j+1} = \Pi_{n-j,n-i} t_n^{j+1} .
  \end{align*}
  It follows that
  \begin{equation*}
    \pi_n = \sum_{k=0}^{n-1} \sum_{0\le i_1<j_1<\ldots\le i_k<j_k\le n}
    \Pi_{n-j_1,n-i_1} \ldots \Pi_{n-j_k,n-i_k} t_n^{n+1} .
  \end{equation*}
  The proof is completed by showing that if $s<p$,
  \begin{equation}
    \label{PiPi}
    \Pi_{p,q} \Pi_{r,s} = \Pi_{r,s} \Pi_{p,q} .
  \end{equation}
  This allows us to reverse the order of the factors in
  $\Pi_{n-j_1,n-i_1} \ldots \Pi_{n-j_k,n-i_k}$:
  \begin{equation*}
    \Pi_{n-j_1,n-i_1} \ldots \Pi_{n-j_k,n-i_k} = \Pi_{n-j_k,n-i_k} \ldots \Pi_{n-j_1,n-i_1} .
  \end{equation*}
  To prove \eqref{PiPi}, we must show that
  \begin{equation*}
    s_{n-1,p} \partial_{n,q} s_{n-1,r} \partial_{n,s}
    = s_{n-1,r} \partial_{n,s} s_{n-1,p} \partial_{n,q} .
  \end{equation*}
  This is a consequence of the simplicial relations:
  \begin{align*}
    s_{n-1,p} \partial_{n,q} s_{n-1,r} \partial_{n,s} 
    &= s_{n-1,p} s_{n-2,r} \partial_{n-1,q-1} \partial_{n,s} \\
    &= s_{n-1,r} s_{n-2,p-1} \partial_{n-1,s} \partial_{n,q}
    = s_{n-1,r}  \partial_{n,s} s_{n-1,p} \partial_{n,q} .
    \qedhere
  \end{align*}
\end{proof}

Theorem \ref{DwyerKan} is an immediate consequence of this theorem. In
terms of the Dold--Kan decomposition, we see that
\begin{equation*}
  \T_n x = \sum_{k=0}^n \sum_{0\le i_k<\cdots<i_1<n} s^{n-1}_{i_1}\ldots s^{n-k}_{i_k}
  \pi_{n-k}x_{i_1\ldots i_k} .
\end{equation*}
We see that $\T_n$ is invertible on $M_n$ if and only if $\pi_{n-k}$ is
invertible on $N_{n-k}(M)$ for $0\le k\le n$. But $\pi_n$ is invertible on
$N_n(M)$ if and only if $\kappa_n$ is, by the formulas
\begin{equation*}
  \kappa_n^{-1} = \pi_n^{-1} (1+b_{n+1}d_n)^n (1+d_{n-1}b_n)^{n-1}
\end{equation*}
and
\begin{equation*}
  \pi_n^{-1} = \kappa_n^{-n-1} (1+d_{n-1}b_n) .
\end{equation*}

\end{document}